\documentclass{lmcs} 
\pdfoutput=1

\usepackage{lastpage}
\lmcsdoi{18}{3}{24}
\lmcsheading{}{\pageref{LastPage}}{}{}%
{Mar.~30,~2021}{Aug.~19,~2022}{}

\keywords{signed digit code, exact real number computation,  coinduction, corecursion, program extraction, realizability, Minlog, Haskell}

\usepackage{hyperref}
\usepackage{mathtools}
\usepackage{booktabs}
\usepackage[utf8]{inputenc}
\def\str{\texttt{Str}}

\def\coi{{^{co}\mathbf{I}}}
\def\R{\mathbf{R}}
\def\sd{\mathbf{Sd}}
\def\total{\mathbf{T}}
\def\DC{\mathbf{DC}}

\begin{document}

\title[Limits of signed digit streams]{Limits of real numbers in the binary signed digit representation
}

\author[F.~Wiesnet]{Franziskus Wiesnet\lmcsorcid{0000-0003-3870-6984}}[a]

\author[N.~K{\"o}pp]{Nils K{\"o}pp\lmcsorcid{0000-0002-8280-4744}}[b]	

\address{University of Trento, Via Sommarive 14, 38123 Povo and
Ludwig-Maximilians Universit{\"a}t, Theresienstr. 39, 80333 M{\"u}nchen and TU Wien,
Favoritenstraße 9-11,
1040 Wien}	
\email{franziskus.wiesnet@tuwien.ac.at }
\address{Ludwig-Maximilians Universit{\"a}t, Theresienstr. 39, 80333 M{\"u}nchen}	
\email{koepp@math.lmu.de}  

\begin{abstract}
We extract verified algorithms for exact real number computation from constructive proofs.
To this end we use a coinductive representation of reals as streams of binary signed digits.
The main objective of this paper is the formalisation of a constructive proof that real numbers are closed with respect to limits.
All the proofs of the main theorem and the first application are  implemented  in  the Minlog  proof  system and the extracted terms are further translated into Haskell.
We compare two approaches.
The first approach is a direct proof.
In the second approach we make use of the representation of reals by a Cauchy-sequence of rationals.
Utilizing translations between the two represenation and using the completeness of the Cauchy-reals, the proof is very short.

In both cases  we  use  Minlog's  program  extraction  mechanism to  automatically  extract  a  formally  verified  program  that  transforms  a converging sequence of reals, i.e.~a sequence of streams of binary signed digits together with a modulus of convergence, into the binary signed digit representation of its limit. The correctness of the extracted terms follows directly from the soundness theorem of program extraction.  

As a first application we use the extracted algorithms together with Heron's method
to construct an algorithm that computes square roots with respect to the binary signed digit representation.
In a second application we use the convergence theorem to show that the signed digit representation of real numbers is closed under multiplication.
\end{abstract}

\maketitle

\section{Introduction and motivation}\label{S:one}
\subsection{Real numbers}
Real numbers can be represented in several ways. One of the best-known representations is as Cauchy sequences of rational numbers together with a Cauchy modulus. Namely a \emph{Cauchy real} is a pair $((a_n)_n,M)$ consisting of a sequence $(a_n)_n$ of real numbers and a modulus $M:\mathbb{Z}^+\to \mathbb{N}$ such that $\forall_{p}\forall_{n,m> M(p)}|a_n-a_m|\leq 2^{-p}$, i.e.~$(a_n)_n$ is a Cauchy sequence with modulus $M$.

However, in this paper the representation of real numbers as Cauchy reals will be just a tool. The main theorems of this paper are concerned with the signed digit representation of real numbers.
\subsection{Binary representation vs. signed digit representation}
The binary representation of a real number $x$ in $[-1,1]$ is given by
\[
x = s\sum_{i=1}^{\infty}a_i2^{-i},
\]
where $s\in \{-1,1\}$ and $a_i\in \{0,1\}$ for every $i$. Here and further on, by equality $=$ between two reals we mean an equivalence relation that is compatible with the usual operations and relations on the reals. In reality the specific of the real equality depends on the representation of real numbers.
The binary representation of some concrete real number corresponds to a sequence of nested intervals.
Reading the digits one after the other the interval is halved in each step.
Hence from the binary code we can approximate a real number to arbitrary precision.
\begin{figure}[htbp] 
  \centering
     \includegraphics[width=\textwidth]{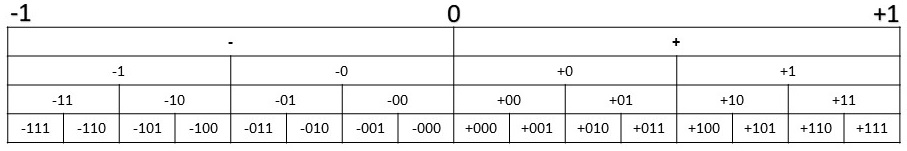}
  \caption{Visualization of the binary code}
  \label{fig:binary}
\end{figure}
Now consider the other direction, i.e.~given a real number, compute the binary representation.
This is not always possible, since the $\leq$-relation is not decidable. Further it is not possible to e.g.~compute the binary representation of $\frac{x+y}{2}$ given representation of $x$ and $y$. Here ``compute'' means that there is an algorithm which takes as input the binary streams of $x$ and $y$ and generates the binary stream representing $\frac{x+y}2$. In particular, the algorithm can only use finitely many binary digits of $x$ and $y$ in order to generate finitely many binary digits of $\frac{x+y}{2}$. For example, it is not possible to compute even the first digit (i.e.~+ or -) of the average of $+\vec{0}???\cdots$ and $-\vec{0}???\cdots$, where $\vec{0}$ is a list with entries $0$ of arbitrary length and $?$ stands for an unknown digit. This is not possible due to the ``gaps'' in the binary representation. They are illustrated in Figure \ref{fig:binary} at $0$, $\frac 12$, $- \frac 12$, $\frac{1}{4}$ and so on. From the first digit of a representation of a real $x$, we can decide $0\leq x$ or $x\leq 0$, which in general can not be done if reasoning constructively about reals.
The signed digit code fills these gaps. For a real number $x\in [-1,1]$ it is defined by
\[
x=\sum_{i=1}^{\infty}d_i2^{-i},
\]
where $d_i\in \{\overline{1},0,1\}$ for every $i$.
\begin{figure}[htbp] 
  \centering
     \includegraphics[width=\textwidth]{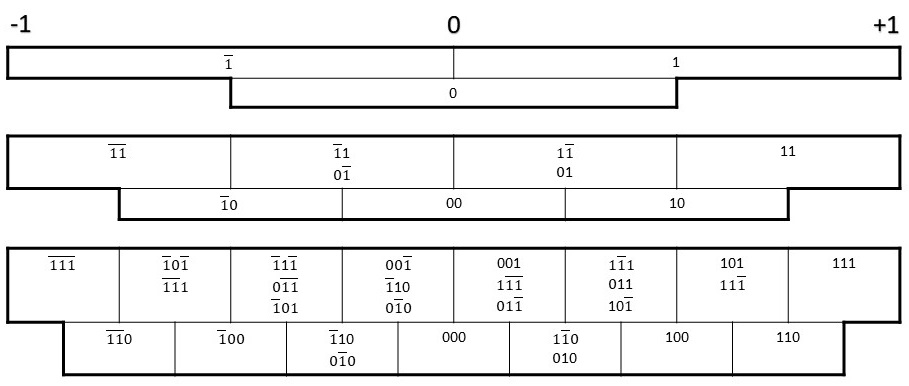}
  \caption{Visualization of the signed digit code}
  \label{fig:SD}
\end{figure}
As the illustration in Figure \ref{fig:SD} makes clear, to compute the first signed digit of a real number $x\in [-1,1]$ we have to decide which of the cases $x\leq 0$, $-\frac 12\leq 0 \leq \frac 12$ or $0\leq x$ holds. Now this is possible by application of the comparability theorem
\[
	\forall_{x,y,z}\left(x<y \to z\leq y \vee x\leq z\right).
\]
Figure \ref{fig:SD} also shows that the SD code of a real number (except $-1$ and $1$) is not unique, whereas the binary code is ``almost'' everywhere unique.

A stream of signed digits is an infinite list $d_1d_2d_3\dots$ of elements in 
$$\textbf{Sd} := \{\overline{1},0,1\}.$$
 We will not use the signed digit streams directly, rather we use a coinductively defined predicate ${^{co}\textbf{I}}$, which is given in the next section. For a real number $x$ a realiser of $x\in{^{co}\textbf{I}}$ is a signed digit stream representing $x$. The desired algorithms are given by the extracted terms of the proofs.
The soundness theorem of program extraction \cite{schwichtenberg2018computational, SchwichtenbergWainer12, Masterarbeit} gives correctness of these algorithms.
\subsection{Historical background}
One of the first papers where  signed digits are used to represent real numbers, was published by Edwin Wiedmer in 1980  \cite{Wiedmer80}.
 The idea to use coinductive algorithms to describe the operators on the reals goes back to Alberto Ciaffaglione and Pietro Di Gianantonio \cite{CiaffaglineGianantonio} and was revised by Ulrich Beger and Tie Hou \cite{berger2008coinduction,bergercoind2011}. The idea to use coinductively defined predicates together with the soundness theorem in this context is due to Ulrich Berger and Monika Seisenberger \cite{BergerSeisenberger12}. The notation and definitions in this paper are taken from \cite{MiyamotoSchwichtenberg15} written by Kenji Miyamoto and Helmut Schwichtenberg. For the implementation of the translations between signed-digit and Cauchy-representation in Minlog see \cite{KoeppMaster}.
\subsection{Implementation in Minlog}
For computing the extraced terms and verifying the correctness of the proofs, the proof assistant Minlog \cite{MinlogPage} is used. An introduction to Minlog can be found in \cite{wiesnet2018introduction} or \texttt{doc/tutor.pdf} in the Minlog directory. The implementation of the proofs can be found in the file \texttt{examples/analysis/sdlim.scm} in the directory of Minlog.
After each proof we state its computational content not in the notation of Minlog but in the notation of Haskell, since the runtime of the programs in Haskell is shorter, and the terms can be defined in a more readable way. 
So after each proof we give the extracted term of the proof which was translated to Haskell using the command \texttt{terms-to-haskell-program}.
 \subsection{Procedure of this paper}
In the next section we define a coinductively defined predicate $\coi$ on reals. Its computational interpretation is that a real number $x\in\coi$ has a signed digit representation. Then we prove some basic properties about it. In the second part of this chapter, we introduce the predicate $\R$  on reals. The computational interpretation $\R x$ is the existence of a sequence $as$ of rationals and a modulus $M$ such that $as$ converges to $x$ with modulus $M$. We conclude the second section by showing that $[-1,1]\cap \R$ and $\coi$ are equivalent.

The third section contains two proofs of the main theorem. We show that the limit of a converging sequence in $\coi$ is again in $\coi$. The first proof is a direct proof. Computationally it operates on the signed digit stream of real numbers only. In the second proof we use the equivalence between $\coi$ and $\R$ and that $\R$ is closed under limit, which is known as the completeness of Cauchy-reals. In both cases the computational content of the proof is a function which takes a stream of signed digit streams and a modulus and returns a new signed digit stream. 

The last section contains two applications of the convergence theorem. To show the square root of a real number in $\coi$ is again in $\coi$ we use Heron's method and the convergence theorem. Lastly we consider the multiplication of two reals numbers in $\coi$.
By representing one factor as a limit of reals we obtain a multiplication
program for signed digit streams as a simple iteration of the average function.

\section{Formalisation}\label{Formalisation}
\subsection{The theory of computational functionals (TCF)}
We use the formal theory TCF to formalize statements like ``$x$ is represented by some signed digit stream". In this section we give a short overview of TCF. For a complete and formal introduction we refer to \cite{SchwichtenbergWainer12, Masterarbeit}.

In TCF all terms are typed. Types in TCF are either type variables, function types or algebras. Algebras can be seen as fixpoints of their constructors. For examples, the type $\mathbb{N}$ of natural numbers is the algebra with the constructors $0: \mathbb{N}$ and $S:\mathbb{N}\to\mathbb{N}$. In short notation we express this as $\mathbb{N}:=\mu_{\xi}(\xi,\xi\to \xi)$, where $\mu$ is interpreted as least-fixed-point operator. 
Since each variable comes with a type, we will use the following naming conventions to supress type declarations.
\begin{nota}
The following table shows which variables have which type.
\begin{alignat*}{3}
	&m,n: \mathbb{N}\qquad\qquad
	&a,b: \mathbb{Q}\qquad\qquad
	&M,N: \mathbb{Z}^+\to \mathbb{N} \\
	&d,e,k: \mathbb{Z}\qquad\qquad
	&x,y: \mathbb{R}\qquad\qquad
	&as,bs: \mathbb{N}\to\mathbb{Q}  \\
	&p,q: \mathbb{Z}^+ \qquad\qquad
	&v,u: \mathbb{S} \qquad\qquad
	&xs,ys: \mathbb{N}\to \mathbb{R}
\end{alignat*}
\noindent
If other variables are used, their type is either not relevant or we declare it individually. 
\end{nota}
Here, $\mathbb{Z}^+$ is defined as the positive (binary) numbers, $\mathbb{Z}$ as the integers,  $\mathbb{Q}$ as the rational numbers and $\mathbb{R}$ as real numbers. How these algebras are defined in detail however is not important for our purpose.
In particular, in Minlog the type of real numbers $\mathbb{R}$ is explicitly defined as the type $(\mathbb{N}\to \mathbb{Q})\times (\mathbb{Z}^+\to \mathbb{N})$. Since in the following proofs the concrete representation of real numbers is not important, we view $\mathbb{R}$ as an abstract datatype and assume that we have abstract axiomatized reals with the usual operations including addition, multiplication, less-than and an equivalence-relation such that the other operations are compatible with it. We will refer to explicit representations by using predicates e.g.~$x\in\mathbf{R}$ and the computational content of a proof of this statement is a witness that $x$ has an $\mathbf{R}$-representation. 

In TCF predicates are defined (co-)inductively. Each inductively defined predicate comes with introduction axioms, also called clauses, and an elimination axiom. A coinductively defined predicate will be given by a closure axiom and a coinduction axiom. An example of an inductivly defined predicate is the totality predicate $\mathbf{T}$ which we discuss below or the predicate $\mathbf{R}$ defined in Section  \ref{sec:CauchyReals}. In Section \ref{Sec:CoindDef} we introduce a coinductively defined predicate regarding the dinged digit representation. This coinductively defined predicate is the main reason why TCF is the most suitable as underlying theory for our purpose.
For an unary predicate $A$, we write  $t\in A$  for $At$ and $\forall_{t\in A}B$, $\exists_{t\in A}B$ are short for $\forall_{t}(At\to B)$ and $\exists_t(At\wedge B)$, respectively. Examples for these abbreviations that we later use include $x\in\coi$, $x\in\R$, $d\in\mathbf{SD}$.

Note that in TCF the existence quantifier as well as the conjunction and the distinction are formally inductively defined predicates. For examples, $A\wedge B$ is defined by the clause $A\rightarrow B \rightarrow A\wedge B$, in short notation $A\wedge B := \mu_X(A\to B \to X)$. As this notation suggest, $A\wedge B$ is the least predicate $X$ which fulfills $A\to B \to X$. Furthermore, we have $A\vee B := \mu_X(A\to X, B\to X)$ and $\exists_t A := \mu_X(\forall_t(A\rightarrow X))$.

Another important property of TCF is that a term with a certain algebra as type does not have to consist of finitely many constructors of this type. For example, a natural number $n: \mathbb{N}$ does not have to be in the form $S\dots S 0$. This means that terms in TCF are partial in general. E.g.~we can also consider an infinite natural number which behaves like $SSS\dots$. However, we can not longer prove statements of the form $\forall_tA(t)$ by induction on $t$ as we do not know how $t$ is constructed and hence we can ad hoc not use something like induction on natural numbers. In order to use induction after all, we will use the totality predicate $\total$ of TCF. Informally speaking, $t\in\total$ for some term $t:\tau$  means that $t$ is a finite constructor expression if $\tau$ is an algebra, or $t$ maps total object to total objects, if $\tau$ is a function type. E.g.~for a sequence of natural numbers $ns:\mathbb{N}\to\mathbb{N}$ we have $ ns\in\total:=\forall_{n\in\total}(ns\ n)\in\total$ where $n\in\total$ is the inductive predicate given by the clauses $0\in\total$ and $n\in\total\to (n+1)\in\total$. The elimination axiom of $n\in\total$ is induction over natural numbers. Formally, the totality predicate is defined by recursion over the type. In particular, for each type we have an individual totality predicate. However, we do not mention this explicitly in the notation. For example, we just write $n \in \mathbf{T}$ instead of $n\in \textbf{T}_{\mathbb{N}}$, as the type is clear from the context. We furthermore assume that predicates like $\leq$ on natural numbers or (positive) integers are defined for total objects only. In particular, if we write something like $\forall_{n\geq M(p)}A$, we mean $\forall_{n}(n \in \mathbf{T}\wedge n\geq M(p)\to A)$.
\subsection{Program extraction from proofs}
In this section we give an overview on the process of program extraction from proofs in TCF. For formal definitions we refer to \cite{Masterarbeit,SchwichtenbergWainer12,schwichtenberg2018computational,KoeppMaster}.
In this short section we do not give formal definition as they are quite complex and we will use the proof assistant Minlog in any case to carry out the program extraction.

The computational content arises from the (co-)inductively defined predicates. When defining an (co-)inductively defined predicate or a predicate variable, it must also be determined whether it is computationally relevant (cr) or non computational (nc).
For example, the totality predicate $\mathbf{T}$ is defined as computationally relevant. The same goes for the predicates $\coi$, $\R$ and $\mathbf{SD}$, which we introduce later. The equality and inequality on real numbers are non computational.
A formula is computational relevant (cr), if its last conclusion is $A\vec{t}$ where $A$ is a computational relevant predicate. Note that the universal and existence quantifier by themselves will not carry computational content, in particular the type of the formulas $A$ and $\forall_x A$ are the same. But we will use the abbreviations $\forall_{t\in A}$ and $\exists_{t\in A}$, where $t\in A$ is computationally relevant (as long as $A$ is). In the last section we said that we use abstract axiomatized real numbers. Here, we require that the axioms are non computational. This is the case for a usual axiomatisation as the axioms are about equations and inequalities.

In a first step, from a cr formula $A$ the type \textit{type} $\tau(A)$ and \textit{realizer predicate} $A^\mathbf{r}$ are defined. Formally, this is done by recursion on the structure of the formula. The realiser predicate is a predicate which takes a term of the type of the formula and states that a term is a realizer of the formula, i.e.~it adheres to the computational requirements stated in the formula. 

In a second step, the \textit{extracted term} $\operatorname{et}(M)$ of the formalized proof $M$ of $A$ is computed. The extracted term is a $\lambda$-term with the type of the formula and is defined by recursion over proofs. It represents the extracted algorithm from the formal proof. In our case we will state this term after each proof which was formalized in Minlog, translated to the notation of Haskell.

In the last step of program extraction we generate a proof that the extracted term is indeed a realizer of the realizer predicate, i.e.~$A^\mathbf{r}\operatorname{et}(M)$. This is the so-called soundness proof. Note that this proof can be generated automatically in Minlog.

In a nutshell, the result of formal program extraction is an algorithm in the form of a $\lambda$-term and the proof of its correctness. However, as it is hardly possible to describe a formal proof on paper, we use the proof assistant Minlog. In Minlog the last three steps above can be done automatically, so the laborious part is to find the right formulation of the theorem and the implementation of the constructive proof in Minlog. For the right formulation, we use the predicate $\coi$ which is given in the next section.
\subsection{Coinductive definition of the signed digit representation}
\label{Sec:CoindDef}
\begin{defi}[sd-code representation]
We define ${^{co}\textbf{I}}$ as the greatest fixed point of the operator
\[
\Phi(X):= \left\{x\ \middle|\ \exists_{d\in\mathbf{SD},x'}\left(Xx'\wedge |x'|\leq 1\wedge x=\frac{d+x'}2\right)\right\}.
\]
\end{defi}
A realiser of ${^{co}\textbf{I}}x$ has the type  $$\tau({^{co}\textbf{I}})=\mu_{\tau(X)}(\tau(\Phi(X))\rightarrow \tau(X))=\mu_{\xi}(\textbf{Sd}\rightarrow \xi \rightarrow \xi ).$$ Here we have identified $\tau(\textbf{Sd})=\mu_{\xi}(\xi,\xi,\xi)$ with $\textbf{Sd}$ itself. We define $\texttt{Str}:=\tau({^{co}\textbf{I}})$ and by $\mathrm{C}$ we denote the only constructor of $\texttt{Str}$. In Haskell notation \textbf{Sd} and \texttt{Str} are given by 
\begin{verbatim}
  data Sd = SdR  | SdM  | SdL
  data Str = C Sd Str
\end{verbatim}
In this notation we see that an element $\mathrm{C} d v$ is a $\sd$-stream with first digit $d$ and tail $v$. Sometimes we abbreviate $\mathrm{C} d v$ by just writing $dv$. We will also use this notation for reals: If we write something like $dx$ for a real number $x$ and a signed digit $d$, we  mean $\frac{d+x}{2}$.

\noindent The definition of ${^{co}\textbf{I}}$ as greatest fixpoint of $\Phi$ can be expressed by the two axioms 
\begin{align*}
{^{co}\textbf{I}}^-:\ &{^{co}\textbf{I}}\subseteq \Phi({^{co}\textbf{I}})\\
{^{co}\textbf{I}}^+:\ &X\subseteq \Phi({^{co}\textbf{I}}\cup X)\rightarrow X\subseteq {^{co}\textbf{I}},
\end{align*}
where $X$ is an unary predicate variable on real numbers. It is called \emph{competitor predicate}.
The first axiom ${^{co}\textbf{I}}^-$ says that ${^{co}\textbf{I}}$ is a fixpoint of $\Phi$. Expressed in elementary formulas it is given by
\[
\forall_{x\in {^{co}\textbf{I}}}\exists_{d\in\mathbf{Sd},x'}\left(x'\in{^{co}\textbf{I}}\wedge |x'|\leq 1\wedge x=\frac{d+x'}2\right).
\]
The type of this axiom is $\tau({^{co}\textbf{I}}^-)=\texttt{Str}\rightarrow \textbf{Sd}\times \texttt{Str}$ and a realiser is the destructor $\mathcal{D}$ given by the computation rule
\[
\mathcal{D}(\mathrm{C}dv) := \langle d,v\rangle.
\]
The destructor takes a stream and returns a pair consisting of its first digit and its tail. Using the projectors $\pi_0$ and $\pi_1$ one gets the first digit and the tail, respectively.
E.g.~consider a cr formula of the form $x\in\coi\to A$ of type $\texttt{Str}\to\tau(A)$. Now assume that in its proof, $\coi^-$ is used with $x\in\coi$ at some point. On the computational level this corresponds to reading the head of the input stream and storing its tail.

\noindent
The second axiom ${^{co}\textbf{I}}^+$ expresses that ${^{co}\textbf{I}}$ is the greatest fixpoint in a strong sense. It is explicitly given by:
\[
\label{eq:coind}
\tag{$\star$}
\forall_{x}\left(Xx\to
\forall_{x}\left(Xx\to \exists_{d\in\mathbf{Sd},x'}\left(x'\in\left({^{co}\textbf{I}}\cup X\right)\wedge|x'|\leq 1\wedge x=\dfrac{d+x'}{2}\right)\right)\rightarrow x\in{^{co}\textbf{I}}\right)
\]
The type depends on the type of the predicate variable $X$, namely
\[
\tau({^{co}\textbf{I}}^+)= \tau(X) \rightarrow \left(\tau(X)\rightarrow \textbf{Sd}\times(\texttt{Str}+\tau(X))\right)\rightarrow \texttt{Str}.
\]
A realiser of ${^{co}\textbf{I}}^+$ is the corecursion operator $^{co}\mathcal{R}$ which is given by the computation rule
\[
^{co}\mathcal{R}t f := \begin{cases}
	\mathrm{C} (\pi_0(ft))v&\text{if\ }\pi_1(f t)=\text{in}_0(v)\\
	\mathrm{C}(\pi_0(ft))^{co}\mathcal{R}t' f&\text{if\ }\pi_1(f t)=\text{in}_1(t').
\end{cases}
\]
Here $\text{in}_0$ and $\text{in}_1$ are the two constructors of the type sum $\texttt{Str}+\tau(X)$.
If $\pi_1(ft)$ has the form $\text{in}_0v$, the corecursion stops and we have $\mathrm{C} (\pi_0(ft))v$ as signed digit representation. If it has the form $\text{in}_1t'$, the corecursion continues with the new argument $t'$. In both cases we have obtained at least the first digit $\pi_0(ft)$ of the stream. By iterating the corecursion we can generate each digit one by one. In Haskell we have
\begin{verbatim}
  strDestr :: (Str -> (Sd, Str))
  strDestr (C s u) = (s , u)

  strCoRec :: (alpha -> ((alpha -> (Sd, (Either Str alpha))) -> Str))
  strCoRec g h = (C (fst (h g)) 
                    (case (snd (h g)) of
	                      { Left u0 -> u0 ;
		                        Right g1 -> (strCoRec g1 h) })).
\end{verbatim}
Moreover we sometimes use the following functions.
\begin{verbatim}
  hd :: (Str -> Sd)
  hd u = fst(strDestr u)
  
  tl :: (Str -> Str)
  tl u = snd(strDestr u)
  
  sdtoint :: Sd -> Integer
  sdtoint SdR = 1
  sdtoint SdM = 0
  sdtoint SdL = -1
  
  id :: Pos -> Nat
  id p = p
\end{verbatim}
Here \texttt{fst} and \texttt{snd} are the pair-projections.
\subsection{Basic lemmas}
We prove two basic lemmas, which will often occur in the proofs following:
\begin{lem}[\texttt{CoICompat}]\label{Lem:CoICompat}
The predicate $\coi$ is compatible with real equality, i.e. 
\[
\forall_{x\in\coi}\forall_{y}\left( x=y \rightarrow y\in{^{co}\textbf{I}}\;\right)
\]
\end{lem}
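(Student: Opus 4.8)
The plan is to prove this by a single application of the coinduction principle $\coi^+$. I would take as competitor predicate
\[
X := \{\, y \mid \exists_x(x\in\coi \wedge x = y)\,\}.
\]
By $\coi^+$ it suffices to establish the coinductive step $X\subseteq\Phi(\coi\cup X)$; this yields $X\subseteq\coi$, and since the hypotheses $x\in\coi$ and $x=y$ of the lemma are exactly what it takes to witness $Xy$, the desired conclusion $y\in\coi$ follows at once.

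For the coinductive step, fix $y$ with $Xy$, i.e.\ fix a real $x\in\coi$ with $x=y$. Applying the unfolding axiom $\coi^-$ to $x\in\coi$ yields a digit $d\in\sd$ and a real $x'$ with $x'\in\coi$, $|x'|\le 1$ and $x=\tfrac{d+x'}{2}$. I would then use $d$ and $y':=x'$ as the witnesses required by $\Phi(\coi\cup X)$ at $y$: we have $y'\in\coi$, hence $y'\in\coi\cup X$; we have $|y'|\le 1$; and from $x=y$ together with $x=\tfrac{d+x'}{2}$ we obtain $y=\tfrac{d+y'}{2}$, using only symmetry and transitivity of $=$ and its compatibility with the rational operations. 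This proves $y\in\Phi(\coi\cup X)$ and closes the coinduction.

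I do not expect any real obstacle here — the statement is a routine one-step coinduction, which is why it is placed among the basic lemmas. The only point requiring a little care is that the witness $x'$ must be fed into the $\coi$-component of $\coi\cup X$ (not into $X$), so that computationally the corecursion $^{co}\mathcal{R}$ halts immediately after emitting the first digit: the extracted term simply destructs the incoming stream with $\mathcal{D}$ and passes its tail through the $\mathrm{in}_0$-branch, so it is extensionally the identity on $\str$. This is exactly what one should expect, since replacing the real a stream represents by an equal real ought to leave the stream itself untouched.
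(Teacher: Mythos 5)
Your proposal is correct and matches the paper's proof: the same competitor predicate $\exists_{x\in\coi}(x=y)$, a single application of $\coi^+$ whose coinductive step uses $\coi^-$ on the $\coi$-witness and places the tail in the $\coi$-branch of $\coi\cup X$. Your remark that the extracted term is extensionally the identity on $\str$ is also exactly what the paper observes for \texttt{cCoICompat}.
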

\begin{proof}
We apply \eqref{eq:coind} to the predicate $Px:=\exists_{y\in\coi}(x=y)$:
\[
\forall_x\left(Px\rightarrow
\forall_x\left(Px \rightarrow\exists_{d\in\mathbf{Sd},x'}\left(x'\in\left({^{co}\textbf{I}}\cup P\right)\wedge|x'|\leq 1\wedge x=\dfrac{d+x'}{2}\right)\right)
\rightarrow x\in{^{co}\textbf{I}}\right)
\]
It is sufficient to prove the second premise. So assume $x$ and $y\in{^{co}\textbf{I}}$ with $x=y$ are given. Using $y\in{^{co}\textbf{I}}$ with $\coi^-$ we get $e\in \textbf{Sd}$ and $y'\in {^{co}\textbf{I}}$ with $|y'|\leq 1$ and $y=\frac{e+y'}2$. Hence $d:=e$ and $x':=y'$ have the desired properties.
\end{proof}
In the following proofs, this theorem is used tacitly. In Minlog it has the name \texttt{CoICompat}.
The extracted term of this theorem is given by
\begin{verbatim}
  cCoICompat :: (Str -> Str)
  cCoICompat u0 = strCoRec u0 (\ su1 -> (case su1 of
                                          {s2 u2 -> (s2,Left u2)}))
\end{verbatim}
Assume \texttt{f} is the costep-function above, then
for \texttt{C s u} some stream we have \texttt{f(C s u) = (s,Left u)}. So if we unfold the \texttt{strCoRec} we get
\begin{verbatim}
  cCoICompat (C s u) = C s u,
\end{verbatim}
i.e.~the computational content of this lemma is actually the identity. Hence to increase readability, we will leave it out in the following.
\begin{lem}[\texttt{CoIClosureInv}]\label{Lem:CoIClosure}
\[
\forall_{x\in\coi,d\in\mathbf{Sd}}\frac{d+x}2\in \coi
\]
\end{lem}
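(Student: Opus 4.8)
The plan is to prove this by coinduction, i.e.\ by applying the greatest-fixpoint axiom \eqref{eq:coind} to a suitable competitor predicate. Intuitively the statement only says that prepending the digit $d$ to the signed digit stream of $x$ yields a stream for $\frac{d+x}2$, so the computational content we are aiming for is the one-step operation $\langle v,d\rangle\mapsto\mathrm{C}\,d\,v=dv$, and the proof should be short.

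Before invoking coinduction I would record the auxiliary observation that $x\in\coi$ implies $|x|\le 1$: applying $\coi^-$ to $x\in\coi$ gives $e\in\mathbf{Sd}$ and $x'\in\coi$ with $|x'|\le 1$ and $x=\frac{e+x'}2$, whence $|x|\le\frac{|e|+|x'|}2\le 1$ since $|e|\le 1$ for every $e\in\mathbf{Sd}$. This derivation carries no computational content, so it does not affect the extracted term.

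Now fix $x\in\coi$ and $d\in\mathbf{Sd}$ and apply \eqref{eq:coind} to the competitor predicate $X:=\{z\mid\exists_{e\in\mathbf{Sd},y\in\coi}(z=\frac{e+y}2)\}$. The first premise, $X(\frac{d+x}2)$, holds with $e:=d$ and $y:=x$. For the second premise, assume $z$ with $Xz$, so $z=\frac{e+y}2$ for some $e\in\mathbf{Sd}$ and $y\in\coi$; then the witnesses $e$ and $x':=y$ work, because $y\in\coi\subseteq\coi\cup X$, $|y|\le 1$ by the auxiliary observation, and $z=\frac{e+y}2$ by assumption. Applying \eqref{eq:coind} gives $X\subseteq\coi$ and hence $\frac{d+x}2\in\coi$.

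I do not expect a genuine obstacle; this is a warm-up lemma. The one point that needs attention is that in the step above $y$ must be put into the $\coi$-component of $\coi\cup X$, not into the $X$-component: this is what makes the corecursion halt at once after emitting a single digit, so that the extracted term is the expected prepend $\langle v,d\rangle\mapsto\mathrm{C}\,d\,v$ rather than a program that rebuilds the tail. One could equally argue directly: $\coi$ is a fixpoint of $\Phi$, so $\Phi(\coi)\subseteq\coi$, and $\frac{d+x}2\in\Phi(\coi)$ by the witnesses $d$, $x$ together with $|x|\le 1$; this yields the same conclusion and the same computational content.
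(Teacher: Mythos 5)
Your proposal is correct and matches the paper's proof essentially verbatim: the same competitor predicate $Px:=\exists_{e\in\mathbf{Sd},y\in\coi}\,x=\frac{e+y}2$, the same discharge of the coinduction step by placing $y$ in the $\coi$-component of $\coi\cup P$ (with $|y|\le 1$ from $\coi^-$), yielding the constructor $\mathrm{C}$ as extracted term. No gap.
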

\begin{proof}
We use \eqref{eq:coind} with the predicate
\[
Px := \exists_{d\in\mathbf{Sd},x'\in{^{co}\textbf{I}}}x=\frac{d+x'}{2},
\]
Again, in order to prove the goal formula, it is sufficient to prove the second premise. Therefore our goal is
\[
\forall_x\left(\exists_{d\in\mathbf{Sd},x'\in\coi}\left( x=\frac{d+x'}2 \right) \rightarrow \exists_{d\in\mathbf{Sd},x'}\left(x'\in\left(\coi\cup P\right)\wedge|x'|\leq 1\wedge x=\dfrac{d+x'}{2}\right)\right).
\]
But this follows immediately from $\coi\subseteq {^{co}\textbf{I}}\cup P$ and $x\in\coi\to |x'|\leq 1$.
\end{proof}
In Minlog this lemma has the name \texttt{CoIClosureInv} and its extracted term is given by
\begin{verbatim}
  cCoIClosureInv :: (Sd -> (Str -> Str))
  cCoIClosureInv s0 u1 = strCoRec (s0 , u1) 
                                  (\ su2 -> (
                                    (case su2 of
                                    { (,) s u -> s }) , 
                                    (Left (case su2 of
                                      { (,) s0 u0 -> u0 })))),
\end{verbatim}
which is an elaborate way to write the constructor $\mathrm{C}$, namely if \texttt{f} is the costep-function above then \texttt{f (s0,u1) = (s0,Left u1)} and by unfolding \texttt{strCoRec}
\begin{verbatim}
  cCoIClosureInv s0 u1 = C s0 u1.
\end{verbatim} 
\subsection{Cauchy reals and signed digit streams}
\label{sec:CauchyReals}
We now formalize the relation between reals represented by Cauchy-sequences of rationals and signed digit streams.
\begin{defi}[Cauchy representation]
	\label{def:Cauchy}
	We denote $as:\mathbb{N}\to\mathbb{Q}$ and $M:\mathbb{Z}^+\to\mathbb{N}$ and define 
	\[
		\mathbf{Mon} (M) := M\in\total \wedge\forall_{p\leq q}\left(Mp\leq Mq\right).	
	\]
	For a real $x$ we write $x\in\R$, if there exist $M\in \mathbf{Mon}$ and $as\in\total$ with
	\[
		\forall_{p\in\textbf{T}}\forall_{n\geq M(p)}\left(|x-(as\ n)|<2^{-(p+1)}\right),
	\]
	i.e.~there is a sequence of rationals converging to $x$.
\end{defi}
\noindent In Haskell this representation is given by the datatype
\begin{verbatim}
  data Rea = RealConstr (Nat -> Rational) (Pos -> Nat),
\end{verbatim}
with the pair-projections
\begin{verbatim}
  realSeq :: (Rea -> (Nat -> Rational))
  realSeq (RealConstr as m) = as
	
  realMod :: (Rea -> (Pos -> Nat))
  realMod (RealConstr as m) = m.
\end{verbatim}
 
In the following we will prove that for some $x\in\coi$ and $n\in\total$ there exists  a rational approximation to $x$ with precision $\frac{1}{2^n}$. To get a sequence of rationals representing $x$ we need dependent choice. Moreover later we will use countable choice.
\begin{defi} [Choice Principles]
	\label{def:Choice}
	We denote $f:\mathbb{N}\to\alpha$, then the axiom of dependent choice $\DC$ is given by
	\[
	\exists_{\alpha}P(0,\alpha) \to 
	\forall_{n\in\total,\alpha}\left(P(n,\alpha) \to \exists_{\alpha} P(n+1, \alpha)\right) \to 
	\exists_{f} \forall_{n\in\total} P(n ,f n).
	\]
	It has the type
	\[
		\tau(P)\to\left(\mathbb{N}\to \tau(P)\to \tau(P)\right)\to \mathbb{N}\to \tau(P)
	\]
	and the realizer is given by the recursion operator for $\mathbb{N}$, i.e.
	\begin{verbatim}
	  natRec :: Nat -> a -> (Nat -> a -> a) -> a
	  natRec 0 g h = g
	  natRec n g h | n > 0 = h (n - 1) (natRec (n - 1) g h).
	\end{verbatim}
	The axiom of countable choice \textbf{CC} is given by
	\[
		\forall_{n\in\mathbf{T}}\exists_{\alpha\in\mathbf{T}} P(n,\alpha) \to \exists_{f\in\mathbf{T}}\forall_{n\in\mathbf{T}} P(n,fn)
	\]
	Its type is $(\mathbb{N}\to \alpha\times\tau(P))\to(\mathbb{N}\to \alpha)\times(\mathbb{N}\to\tau(P))$ and the realizer is basically given by the identity, namely
	\[
		\lambda_F \langle \lambda_n(F\,n)_0,\lambda_n(F\,n)_1\rangle.
	\]
\end{defi}
\begin{thm}[\texttt{StrToCs}]
	\label{thm:StrToCs}
	Any real $x$ represented by a signed digit code can be represented by a Cauchy-sequence, i.e.
	\[
		\forall_{x\in\coi}( x\in\R\wedge |x|\leq 1).		
	\]
\end{thm}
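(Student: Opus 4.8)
The statement bundles a computationally inert bound, $|x|\le 1$, with the computationally relevant claim $x\in\R$. The plan is to obtain the bound by a single unfolding of $\coi$, and to build the Cauchy sequence witnessing $x\in\R$ by dependent choice, reading one signed digit of $x$ at each step.

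For $|x|\le 1$ I would apply $\coi^-$ to $x\in\coi$, which gives $d\in\mathbf{Sd}$ and a real $x'$ with $x'\in\coi$, $|x'|\le 1$ and $x=\frac{d+x'}{2}$. Since every signed digit satisfies $|d|\le 1$, the triangle inequality yields $|x|\le\frac{|d|+|x'|}{2}\le 1$. No coinduction is needed, and this part carries no computational content.

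For $x\in\R$ I would invoke $\DC$ with $\alpha:=\mathbb{Q}\times\mathbb{R}$ and the predicate $P(n,\langle a,x'\rangle)$ stating $x'\in\coi \wedge x=a+2^{-n}\cdot x'$. The base case $P(0,\langle 0,x\rangle)$ holds since $x=0+2^{0}\cdot x$. For the step, from $P(n,\langle a,x'\rangle)$ I apply $\coi^-$ to $x'\in\coi$, obtaining $d\in\mathbf{Sd}$ and $x''$ with $x''\in\coi$, $|x''|\le 1$ and $x'=\frac{d+x''}{2}$; then $\langle a+d\cdot 2^{-(n+1)},\,x''\rangle$ witnesses $P(n+1,\cdot)$, because $x=a+2^{-n}x'=\bigl(a+d\,2^{-(n+1)}\bigr)+2^{-(n+1)}x''$. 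Dependent choice then produces a total $f$ with $\forall_{n\in\total}P(n,fn)$; writing $a_n:=\pi_0(fn)$ and $x_n:=\pi_1(fn)$ we get $x=a_n+2^{-n}x_n$, and since $x_n\in\coi$ forces $|x_n|\le 1$ by the first part, $|x-a_n|=2^{-n}|x_n|\le 2^{-n}$. Taking $as:=\lambda n.\,a_n$ and $M:=\lambda p.\,p+2$ gives $M\in\mathbf{Mon}$ and $as\in\total$, and for $n\ge M(p)$ one has $|x-(as\ n)|\le 2^{-(p+2)}<2^{-(p+1)}$, so $x\in\R$. Together with the bound this proves the theorem.

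The dyadic-rational arithmetic in the step case and the monotonicity and totality side conditions are routine. The point that needs care is the choice of the $\DC$-predicate: it must retain the current tail as a member of $\coi$ so that $\coi^-$ can be re-applied, while keeping the real component computationally inert, so that the extracted program is exactly ``consume $n$ digits and return their weighted partial sum'' paired with the modulus $p\mapsto p+2$. Computationally, the use of $\DC$ here unfolds to the recursor $\mathtt{natRec}$ iterating the destructor $\mathcal{D}$ over the input stream.
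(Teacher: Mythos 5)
Your argument follows the same route as the paper's proof: a single application of $\coi^-$ yields the computationally inert bound $|x|\le 1$, and $\DC$ iterates the destructor to accumulate the partial sums $a_n$ while carrying along the remaining tail in $\coi$; your shifted indexing (starting from $a_0=0$ instead of consuming a digit already in the base case) and the modulus $p\mapsto p+2$ instead of the paper's $p\mapsto p$ are harmless variations, and your strict estimate $2^{-(p+2)}<2^{-(p+1)}$ even matches the strict inequality in Definition \ref{def:Cauchy} cleanly.

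There is, however, one genuine defect, precisely at the point you flag as needing care. You place the rational in the choice variable of type $\mathbb{Q}\times\mathbb{R}$ and let $P(n,\langle a,x'\rangle)$ assert only $x'\in\coi\wedge x=a+2^{-n}x'$. Since object quantifiers carry no computational content and the realizer of $\DC$ has type $\tau(P)$, with your predicate $\tau(P)$ is just $\texttt{Str}$: the function $f$ exists only non-computationally, so the concluding step ``gives $as\in\total$'' for $as:=\lambda_n\,\pi_0(f\,n)$ is not justified, and the extracted term would produce only the sequence of tails, not the weighted partial sums you claim it outputs --- yet the realizer of $x\in\R$ is exactly the pair of rational sequence and modulus. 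The paper avoids this by building the totality of the rational into the invariant, taking $P(n,a):=a\in\total\wedge\exists_{y\in\coi}\left(x=2^{-(n+1)}y+a\right)$, so that $\tau(P)$ becomes $\mathbb{Q}\times\texttt{Str}$ and the partial sums are genuinely computed (this is where the \texttt{Rational} component of \texttt{cStrToCsInit} and \texttt{cStrToCsStep} comes from). Adding $a\in\total$ to your conjunction repairs the proof, after which it coincides with the paper's.
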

	\begin{proof}
		Assume $x\in\coi$, then $|x|\leq 1$ holds by $\coi^-$. Now let
			\[
				P(n,a):=a\in\total\wedge 	\exists_{y\in\coi} \left(x=2^{-(n+1)}y+a\right)
			\] 
		We want to apply $\DC$, hence we first prove $\exists_a P(0,a)$.
		By $\coi^-$ there is $y\in\coi,d\in\mathbf{Sd}$ with $x=\frac{y+d}{2}$, so let $a:= \frac{d}{2}$.
		For the second premise of $\DC$ assume $n\in\total, a\in\total$ and $P(n,a)$ i.e.~there exists $y\in\coi$ with $x=\frac{1}{2^{n+1}}y+a$. We need to prove
			\[
				\exists_b\left( b\in\total \wedge \exists_{z\in\coi}\left( x=2^{-(n+2)}z+b\right)\right)		
			\]
		Since $y\in\coi$ we get $d\in\mathbf{Sd},y'\in\coi$ with $y=\frac{y'+d}{2}$.
		Now we choose $b:= a+\frac{d}{2^{n+2}}$ and $z=y'$, then
		\[
			x=\frac{1}{2^{n+1}}y+a=\frac{1}{2^{n+2}}y'+b
		\]
	Hence by $\DC$ there exists $as\in\total$ with
	\[
		\forall_{n\in\total} \exists_{y\in\coi} \left(x=2^{-(n+1)}y+(as\ n)\right).
	\]
	Hence with $Mp:=p$ and $|y|\leq 1$ we get $x\in\R$.
	\end{proof}	
The extracted term of the proof is given by
\begin{verbatim}
  cStrToCsInit :: (Str -> (Rational, Str))
  cStrToCsInit u0 = (sdtoint(hd u0) % 2 , tl u0)
	
  cStrToCsStep :: (Nat -> ((Rational, Str) -> (Rational, Str)))
  cStrToCsStep n0 (a,u0)= (a + sdtoint(hd u0) % (((2 ^ n0) * 2) * 2) ,
                           tl u0)
 
  cStrToCs :: (Str -> Rea)
  cStrToCs u0 = (\ n1 -> (fst (natRec n1 (cStrToCsInit u0) cStrToCsStep)) ,
                 id).
\end{verbatim}
Here \texttt{cStrToCsInit} corresponds to the first premise and \texttt{cStrToCsStep} to the second premise of $\DC$ in the proof. $\DC$ itself only appears as the \texttt{natRec} term in \texttt{cStrToCs}.
Informally we can represent the computational content by
\[
d_0d_1d_2\dots \mapsto \left(\left(\sum_{i=1}^{n}d_i2^{-i}\right)_n,\iota\right),
\]
where $\iota:\mathbb{Z}^+\to\mathbb{N}$ is the canonical inclusion.

For the converse of Theorem \ref{thm:StrToCs} we first prove the following two lemmas.
\begin{lem}[Special case of \texttt{ApproxSplit}]
	\label{thm:ApproxSplit}
	Let $a,b:\mathbb{Q}$ then
	\[
		\forall_{a,b\in\total,x\in\R}\left(a<b\to a\leq x\vee x\leq b\right).
	\]
\end{lem}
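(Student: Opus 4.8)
The plan is to extract a rational Cauchy approximation to $x$ from $x\in\R$ and then make a decision based on a single rational comparison, which is decidable on $\mathbb{Q}$. Concretely, assume $a,b\in\total$ with $a<b$ and $x\in\R$. From $a<b$ on the rationals we can compute a positive rational gap; pick $p\in\mathbb{Z}^+$ with $2^{-p}\le \frac{b-a}{4}$ (or some similar margin), which is a purely arithmetical, non-computational step once the gap is known. Using the witnesses $as\in\total$ and $M\in\mathbf{Mon}$ coming from $x\in\R$, evaluate the sequence at $n:=M(p)$ to obtain a rational $c:=as\,n$ with $|x-c|<2^{-(p+1)}$.

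Next I would do the case distinction on the decidable rational comparison $c\le \frac{a+b}{2}$ versus $\frac{a+b}{2}< c$ (or equivalently compare $c$ with the midpoint, or compare $a$ with $c$ and $c$ with $b$ directly). In the first case I claim $x\le b$: indeed $x< c+2^{-(p+1)}\le \frac{a+b}{2}+2^{-(p+1)}\le \frac{a+b}{2}+\frac{b-a}{4}<b$, using the choice of $p$ and $a<b$. In the second case I claim $a\le x$: symmetrically $x> c-2^{-(p+1)}>\frac{a+b}{2}-2^{-(p+1)}\ge\frac{a+b}{2}-\frac{b-a}{4}>a$. Either way we land in the disjunct $a\le x\vee x\le b$. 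The only slightly delicate point is to track which estimates are strict and to make the margin in the choice of $p$ generous enough that the inequalities genuinely close; a factor like $2^{-p}\le (b-a)/4$ is comfortably sufficient.

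The computational content this produces is exactly what one expects: from the two rationals $a,b$ compute the precision $p$, read off $as\,(M(p))$ from the Cauchy data, perform one Boolean test comparing this rational with the midpoint of $[a,b]$, and return the corresponding bit selecting the left or right disjunct. No appeal to $\coi^-$ or to corecursion is needed here; all the work is in rational arithmetic plus one application of the data packaged in $x\in\R$.

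The main obstacle is not conceptual but bookkeeping: one must be careful that the modulus $M$ is applied at a large enough argument, that the strict/non-strict inequalities in Definition \ref{def:Cauchy} (where $|x-(as\,n)|<2^{-(p+1)}$ is strict) are propagated correctly so that the final comparisons are strict where needed, and that the choice of $p$ is made from $b-a$ via a lemma of the form ``for every positive rational there is $p$ with $2^{-p}$ below it'' — this last fact must either be available as a rational-arithmetic lemma or proved inline, and it should be arranged so it carries the expected (trivial) computational content. Once $p$ is fixed the remaining inequalities are routine.
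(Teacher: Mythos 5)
Your proposal is correct and follows essentially the same route as the paper: extract the rational approximation $as\,(M\,p)$ for a precision $p$ determined by the gap $b-a$, compare it with the midpoint $\frac{a+b}{2}$, and conclude $x\leq b$ or $a\leq x$ accordingly, with the extracted program being a single Boolean comparison. The only differences are cosmetic (your margin $2^{-p}\leq\frac{b-a}{4}$ versus the paper's $2^{-p}<b-a$, and minor bookkeeping of strict versus non-strict inequalities).
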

\begin{proof}
	Given $(as,M)$ a Cauchy-sequence converging to $x$. Find $p$ be such that $\frac{1}{2^p}<b-a$ (which is possible since $a,b\in\total$).
	Then for $n\geq M\ p$
	\[
		|x-(as\ n)|\leq  2^{-(p+1)},
	\]
	i.e.~$x\in\left[(as\ n)-\frac{1}{2^{p+1}},(as\ n)+\frac{1}{2^{p+1}}\right]$. Case $(as\ n)\leq \frac{a+b}{2}$. In that case $x\leq (as\ n)+\frac{1}{2^{p+1}}< b$. Otherwise $a\leq x$.
\end{proof}
Note that it can be proven more generally for $y,z\in\R$ and $y<z$ instead of $a,b$, but we will actually only need it for for the special cases $0<\frac 12$ and $-\frac 12 <0$. The extracted term for the former case is:
\begin{verbatim} 
   cApproxSplitZeroPtFive :: (Rea -> Bool)
   cApproxSplitZeropPtFive (RealConstr as m) = (as (m 3)) <= (1/4)
\end{verbatim}
where \texttt{cRatLeAbsBound} is the extracted term of a proof of $\forall_{a\in \total}\exists_{n\in\total} |a|\leq 2^n$.\\
For the converse of Theorem \ref{thm:StrToCs} we first prove the following lemma.
\begin{lem}[\texttt{CsToStrAux}] For all $x\in\R$ with $|x|\leq 1$
	\[
		\exists_{d\in\mathbf{SD},y\in\R}\left(|y|\leq 1 \wedge x=\frac{y+d}{2}\right).
	\]
\end{lem}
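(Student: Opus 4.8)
The plan is to read off the first signed digit $d$ by deciding, via two applications of Lemma~\ref{thm:ApproxSplit} (the special case of \texttt{ApproxSplit}), in which of three overlapping regions $x$ lies, and then to take $y:=2x-d$. First I would apply Lemma~\ref{thm:ApproxSplit} with $a:=0$, $b:=\frac12$; since $0<\frac12$ this yields $0\leq x$ or $x\leq\frac12$.

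\emph{Case $0\leq x$.} Together with the hypothesis $|x|\leq 1$ this gives $0\leq x\leq 1$. Put $d:=1$ and $y:=2x-1$; then $x=\frac{y+d}{2}$ holds by construction, and $-1\leq y\leq 1$ is equivalent to $0\leq x\leq 1$, so $|y|\leq 1$. \emph{Case $x\leq\frac12$.} Apply Lemma~\ref{thm:ApproxSplit} once more, now with $a:=-\frac12$, $b:=0$; since $-\frac12<0$ this yields $-\frac12\leq x$ or $x\leq 0$. In the subcase $-\frac12\leq x$ we have $-\frac12\leq x\leq\frac12$, so put $d:=0$ and $y:=2x$, giving $x=\frac{y+d}{2}$ and $|y|=2|x|\leq 1$. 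In the subcase $x\leq 0$ we have $-1\leq x\leq 0$ (again using $|x|\leq 1$), so put $d:=\overline{1}$ and $y:=2x+1$, giving $x=\frac{y+d}{2}$ and, since $-1\leq y\leq 1$ is equivalent to $-1\leq x\leq 0$, also $|y|\leq 1$. In all cases $d\in\mathbf{Sd}$ and $|y|\le 1$, so it only remains to verify $y\in\R$.

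For this last point one uses that $\R$ is closed under addition and under multiplication by a rational constant: if $(as,M)$ witnesses $x\in\R$ in the sense of Definition~\ref{def:Cauchy}, then $\bigl(\lambda n.\,2\,(as\,n)-d,\ \lambda p.\,M(p+1)\bigr)$ witnesses $y\in\R$, since for $n\geq M(p+1)$ we have $|y-(2\,(as\,n)-d)|=2\,|x-(as\,n)|<2^{-(p+1)}$, and monotonicity of the modulus is inherited. This gives $y\in\R$ with $|y|\le 1$ and $x=\frac{y+d}{2}$, as required.

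I do not expect a genuine obstacle in this lemma; the only places that need a little care are the bookkeeping for $y\in\R$ — in particular shifting the convergence modulus by one to absorb the factor $2$ — and checking, via the hypothesis $|x|\leq 1$, that the extreme digits $d=\pm1$ really keep $y$ inside $[-1,1]$ (whereas the digit $0$ relies on the overlap $[-\frac12,\frac12]$ of the two comparisons). On the computational side the extracted term is a two-level decision, assembled from the extracted terms of the two instances of \texttt{ApproxSplit} (for $0<\frac12$ and for $-\frac12<0$), returning the digit $d$ together with the affine transformation $\lambda n.\,2\,(as\,n)-d$ of the input Cauchy sequence and the shifted modulus; iterating this step is exactly what will produce the signed digit stream in the converse of Theorem~\ref{thm:StrToCs}.
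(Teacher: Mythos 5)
Your proposal is correct and follows essentially the same route as the paper: two applications of the \texttt{ApproxSplit} special cases ($0<\frac12$ and $-\frac12<0$) to pick $d$, then $y:=2x-d$ witnessed by the transformed sequence $\lambda n.\,2(as\ n)-d$ with modulus $\lambda p.\,M(p+1)$. The only difference is the trivial nesting order of the two comparisons, which is immaterial since the signed digit is not unique.
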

\begin{proof}
	Let $(as,M)$ a Cauchy-sequence converging to $x$.
	We use Lemma \ref{thm:ApproxSplit} with $0<\frac{1}{2}$ respectively $-\frac{1}{2}<0$. We define 
	\[
		d:=
		\begin{cases}
			\overline{1} &\text{if\quad} x<0,\\
			0 &\text{if\quad} -\frac{1}{2}<x<\frac{1}{2},\\
			1 &\text{if\quad} 0<x,
		\end{cases}
	\]
	$bs\ n := 2(as\ n)-d$, $N p:=M(p+1)$ and $y=2x-d$. Then $(bs,N)$ is a Cauchy-sequence converging to $y$. Furthermore $x=\frac 12(y+d)$ and $|y|\leq 1$ by definition. 
\end{proof}
The extracted term is given by
\begin{verbatim}
  cCsToStrAux :: (Rea -> (Sd, Rea))
  cCsToStrAux (RealConstr as m) = 
    (if ((as (m 3)<=-1/4) then 
        (SdL , (RealConstr (\ n3 -> (((2) * (as n3)) + (1))) 
                           (\ p3 -> (m (p3 + 1))))) 
     else 
       (if ((as (m 3)<=1/4) then 
          (SdM , (RealConstr (\ n3 -> ((2) * (as n3))) 
                             (\ p3 -> (m (p3 + 1))))) 
        else 
          (SdR , (RealConstr (\ n3 -> (((2) * (as n3)) + (-1)))
                             (\ p3 -> (m (p3 + 1))))))))).
\end{verbatim}
Again we can represent the computational content informally, namely
\[
	(as,M)\mapsto (\mathtt{g}(as,M),(\mathtt{h}(as,M),N)),
\]
where $N\ p:=M(p+1)$ and $\mathtt{g},\mathtt{h}$ are the functions
\begin{align*}
	\mathtt{g}(as,M)&:=
	\begin{cases}
		\bar{1} &\text{if\quad}as (M\ 3)\leq -\frac{1}{4},\\
		0 &\text{if\quad}|as(M\ 3)|\leq\frac{1}{4},\\
		1 &\text{otherwise}.
	\end{cases}\\
	\mathtt{h}(as,M)&:=2as-g(as,M)
\end{align*}
Using this lemma the proof of the translation from Cauchy-sequences to stream is very short:
	\begin{thm}[\texttt{CsToStr}]
		\label{thm:cstostr}
		\[
			\forall_x\left(x\in\R\to |x|\leq 1\rightarrow x\in\coi\right)
		\]
	\end{thm}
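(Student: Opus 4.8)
The plan is to apply the coinduction principle \eqref{eq:coind} with the competitor predicate $Px := x\in\R \wedge |x|\leq 1$, which is precisely the hypothesis we are handed. Since $x\in\coi$ then follows from $\coi^+$ once we supply $Px$ and verify the second premise, it suffices to prove
\[
\forall_x\left(Px \to \exists_{d\in\mathbf{Sd},x'}\left(x'\in(\coi\cup P)\wedge|x'|\leq 1\wedge x=\frac{d+x'}2\right)\right).
\]
So fix $x$ with $x\in\R$ and $|x|\leq 1$ and aim to produce the witnesses $d$ and $x'$.

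This is exactly what Lemma \texttt{CsToStrAux} delivers: applied to $x$ it gives $d\in\mathbf{Sd}$ and $y\in\R$ with $|y|\leq 1$ and $x=\frac{y+d}2$. Put $x':=y$. Then $|x'|\leq 1$ and $x=\frac{d+x'}2$ are immediate, and from $y\in\R$ together with $|y|\leq 1$ we obtain $Py$, so $x'\in\coi\cup P$ via the right injection. This discharges the second premise of \eqref{eq:coind}, and the theorem follows. On the computational side the competitor predicate has type $\tau(P)=\tau(\R)=(\mathbb{N}\to\mathbb{Q})\times(\mathbb{Z}^+\to\mathbb{N})$, so a realiser of $Px$ is a Cauchy real $(as,M)$; the extracted term of the second premise is essentially \texttt{cCsToStrAux} with the returned real tagged by $\text{in}_1$, and feeding this into the corecursion operator $^{co}\mathcal{R}$ produces the stream by iterating \texttt{cCsToStrAux} — at each step reading off one signed digit and feeding the doubled-and-shifted Cauchy real back in.

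I do not expect a genuine obstacle here: all the real work has already been pushed into \texttt{CsToStrAux} (and, behind that, into the special case of \texttt{ApproxSplit}). The only points needing a little care are organisational rather than mathematical: one must take the competitor predicate to carry exactly the computational content of an $\R$-representation and nothing more, so that the extracted program is the clean corecursion over \texttt{cCsToStrAux} rather than something cluttered with redundant data, and one must check that the $|y|\leq 1$ and the equality $x=\frac{y+d}2$ in \texttt{CsToStrAux} are genuinely non-computational so they do not leak into the extracted term.
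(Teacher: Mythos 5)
Your proof is correct and follows essentially the same route as the paper: coinduction via $\coi^+$ with the lemma \texttt{CsToStrAux} supplying the step, so that the extracted program is the corecursive iteration of \texttt{cCsToStrAux}. The only difference is the choice of competitor predicate — the paper takes $Px:=\exists_{d\in\mathbf{Sd},y\in\R}\left(|y|\leq 1\wedge x=\frac{y+d}{2}\right)$, applying the auxiliary lemma once up front and once per step (so its corecursion state carries a digit alongside the Cauchy real), whereas your $Px:=x\in\R\wedge|x|\leq 1$ uses the hypothesis directly and applies the lemma once per step, which is a harmless simplification yielding, if anything, a marginally cleaner extracted term.
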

	\begin{proof}
		Assume $x\in\R$ and $|x|\leq 1$. We
		use \eqref{eq:coind} with
			\[
				Px:= \exists_{d\in\mathbf{Sd},y\in\R}\left(|y|\leq 1 \wedge x=\frac{y+d}{2}\right).
			\]
		By the previous lemma it suffices to prove the second premise, namely
		\[
			\forall_x\left(P x \to \exists_{d\in\mathbf{Sd},x'}\left(x'\in\left({^{co}\textbf{I}}\cup P\right)\wedge|x'|\leq 1\wedge x=\dfrac{d+x'}{2}\right)\right).
		\]
		But this follows immediately by another application of the lemma, namely let $d\in\sd,y\in \R$ with $|y|\leq 1$ and $x=\frac{y+d}{2}$. Then by the lemma there are $e\in\sd,z\in\R$ with $|z|\leq 1$ and $y=\frac{z+e}{2}$ and hence $y\in P\subseteq \coi\cup P$.
	\end{proof}
The extracted term is
\begin{verbatim}
  cCsToStr :: (Rea -> Str)
  cCsToStr x0 = strCoRec 
                 (cCsToStrAux x0) 
                 (\ sx1 -> (case sx1 of { (,) s0 x0 -> 
                 	          (case (cStrToCsAux x0) of { (,) s1 x1 -> 
                 	          	(s0 , (Right (s1 , x1))) }) })),
\end{verbatim}
and informally
\[
	x\mapsto \pi_0(\texttt{cCsToStrAux}\ x)::\texttt{cCsToStr}(\pi_1(\mathtt{cCsToStrAux}\ x)).
\]
\section{Convergence theorem}\label{ConvThe}
The convergence theorem states that the signed digit representation is closed under limits.
\noindent
In this section we consider a direct proof of this theorem, which only relies on the signed digit representation of real numbers, and an indirect proof, which works with Cauchy reals and uses the translation between the signed digit code and Cauchy reals. After proving the convergence theorem in these two ways, we compare the extracted terms of both proofs.
\begin{defi}[Convergence]
	Let $xs:\mathbb{N}\to\mathbb{R}$ and $M\in\mathbf{Mon}$ then $xs$ is a \textit{Cauchy-sequence} with modulus $M$ iff
	\[
	\forall_{p\in\total}\forall_{n,m\geq M(p)} |(xs\ n)-(xs\ m)|\leq 2^{-p},
	\]
	we also write $\mathbf{Cauchy}(xs,M)$.
	The sequence $xs$ converges to $x$ with Modulus $M$ iff
	\[
	\forall_{p\in\total}\forall_{n\geq M(p)} |x-(xs\ n)|\leq 2^{-p},
	\]
	we write $\mathbf{Conv}(xs,x,M)$.
\end{defi}
The convergence theorem can now be stated in the following way.
\begin{thm}[\texttt{SdLim}]\label{Thm:CoILim}
	Let $xs:\mathbb{N}\rightarrow \mathbb{R}$ be a sequence of reals in $\coi$ which converges to some real $x$ with modulus $M$. Then $x\in\coi$, i.e.
	\[
	\forall_{x,xs;M\in\mathbf{Mon}}\left(\forall_{n\in\total} (xs\ n)\in\coi\to\mathbf{Conv}(xs,x,M)\to x\in\coi\right).
	\] 
\end{thm}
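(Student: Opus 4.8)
The plan is to prove the theorem by coinduction, i.e.\ by applying \eqref{eq:coind} to a suitable competitor predicate $P$ that expresses ``$x$ is the limit of a $\coi$-sequence with some monotone modulus''. Concretely, I would take
\[
Px := \exists_{xs;M\in\mathbf{Mon}}\left(\forall_{n\in\total}(xs\ n)\in\coi \wedge \mathbf{Conv}(xs,x,M)\right).
\]
Since the hypotheses of the theorem give exactly $Px$, it suffices to verify the second premise of \eqref{eq:coind}: for every $x$ with $Px$ there are $d\in\mathbf{Sd}$ and $x'$ with $x'\in(\coi\cup P)$, $|x'|\le 1$, and $x=\frac{d+x'}{2}$. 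The real work is to produce the first signed digit $d$ of the limit from finitely much information about the sequence, and then to show that $x'=2x-d$ again satisfies $P$ (with a shifted sequence and modulus), so that the corecursion can continue.

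To extract the first digit I would exploit that a single member $xs\ n$, for $n$ chosen via the modulus $M$, is close to $x$ \emph{and} lies in $\coi$, so its own first digit localises it. The cleanest route is: pick $n := M(3)$ (or similar), so $|x-(xs\ n)|\le 2^{-3}=\tfrac18$; using $\coi^-$ on $(xs\ n)\in\coi$ write $xs\ n=\frac{e+y}{2}$ with $e\in\mathbf{Sd}$, $y\in\coi$, $|y|\le 1$, so $|xs\ n-\tfrac e2|\le\tfrac14$ and hence $|x-\tfrac e2|\le\tfrac14+\tfrac18<\tfrac12$; therefore $x$ lies within distance $\tfrac12$ of $\tfrac e2\in\{-\tfrac12,0,\tfrac12\}$, which means $d:=e$ is a legitimate first signed digit, i.e.\ $|2x-e|\le 1$. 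Set $x':=2x-d$; then $|x'|\le 1$ and $x=\frac{d+x'}{2}$ are immediate. It remains to check $x'\in P$: with $xs'\ n := 2(xs\ n)-d$ and $M'\ p := M(p+1)$ one has $(xs'\ n)\in\coi$ by Lemma~\ref{Lem:CoIClosureInv}-type reasoning (each $xs\ n$ is in $\coi$, hence so is $\frac{d+(2(xs\ n)-d)}{2}$; more directly, $xs'\ n=2\,xs\ n-d$ need not itself be in $[-1,1]$, so I would instead keep the \emph{original} sequence unscaled and only shift indices — see next paragraph), $M'\in\mathbf{Mon}$, and $\mathbf{Conv}(xs',x',M')$ follows from $|x'-(xs'\ n)| = 2|x-(xs\ n)| \le 2\cdot 2^{-(p+1)} = 2^{-p}$ for $n\ge M'(p)=M(p+1)$.

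The subtle point, and the one I expect to be the main obstacle, is exactly the scaling issue flagged above: to apply $\coi^-$ to members of the shifted sequence we need them in $[-1,1]$, but $2(xs\ n)-d$ need not be — only the \emph{limit} $x'=2x-d$ is guaranteed to satisfy $|x'|\le1$, not the individual approximants. The fix is to be careful about \emph{when} we rescale: rather than carrying $2x-d$ into the competitor predicate with a rescaled sequence, it is cleaner to let $P$ range over sequences together with their modulus and to redo the ``pick $xs\ M(3)$, read its head'' step each time the corecursion fires, choosing at each stage a fresh digit from a fresh well-chosen member of the (suitably index-shifted) sequence; the needed bound $|xs\ n - x|\le 1$ for the members actually used follows because we only ever invoke $\coi^-$ on $xs\ n$ itself (which is in $\coi$, hence in $[-1,1]$), never on a rescaled version. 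This is precisely the design choice distinguishing the ``direct proof'' from the indirect one via $\R$ and Theorem~\ref{thm:cstostr}; getting the competitor predicate and the index bookkeeping right so that Minlog accepts the coinductive step is where the care is required.

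Once the second premise of \eqref{eq:coind} is established, the theorem follows immediately, and the extracted term will be a corecursive program that reads, at stage $k$, the head of $xs(M(k+3))$ (roughly) to emit the $k$-th signed digit of $x$.
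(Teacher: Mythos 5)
Your overall skeleton (coinduction with the competitor predicate $Px:=\exists_{xs;M\in\mathbf{Mon}}(\forall_{n\in\total}(xs\,n)\in\coi\wedge\mathbf{Conv}(xs,x,M))$, digit read off from a member chosen via the modulus, then rescale and shift) is indeed the paper's direct proof, but both key steps of your argument have genuine gaps. First, the digit selection is numerically wrong: from $xs\,n=\frac{e+y}{2}$ with $|y|\le 1$ you only get $|xs\,n-\frac e2|\le\frac12$, not $\frac14$, so with $|x-xs\,n|\le\frac18$ you obtain $|x-\frac e2|\le\frac58$ and the required $|2x-e|\le 1$ fails. This is not repairable by a better modulus: one digit localizes $xs\,n$ only to an interval of radius $\frac12$, and even two digits leave midpoints such as $\pm\frac14$ for which no signed digit $d$ satisfies the needed margin. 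That is exactly why the paper reads \emph{three} digits of $xs(M\,4)$ (Lemma \ref{lem:Triple}), yielding the overlapping cases $xs(M\,4)\ge\frac18$, $xs(M\,4)\le-\frac18$, $|xs(M\,4)|\le\frac14$, whose slack of $\frac18$ survives the $2^{-4}$ approximation error and, crucially, leaves enough margin for the continuation.

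Second, your proposed fix for the continuation does not work. Whatever witnesses $P(x')$ with $x'=2x-d$ must be a sequence converging to $x'$; the unscaled sequence converges to $x$, so ``keeping the original sequence and only shifting indices'' cannot satisfy the competitor predicate (unless you change $P$ to carry the accumulated digits and scaling, which you do not spell out). So the rescaled sequence $ys\,n:=2\,xs(M(4)\sqcup n)-d$ is unavoidable, and the real work — which your proposal leaves unresolved — is to prove $ys\,n\in\coi$. Your first attempt (``CoIClosureInv-type reasoning'') uses Lemma \ref{Lem:CoIClosure} in the wrong direction: it produces $\frac{d+x'}2\in\coi$ from $x'\in\coi$, not conversely. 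The paper resolves this point by using the digit's margin to show, e.g.\ in the case $d=1$, that $0\le xs(M(4)\sqcup n)\le 1$ for all $n$, rewriting $ys\,n=4\bigl(\frac{xs(M(4)\sqcup n)}2-\frac14\bigr)$, and then applying Lemma \ref{Lem:CoIAvHalf} once and Lemma \ref{Lem:CoIDouble} twice (equivalently, Lemma \ref{Lem:CoINegToCoIPlusOne}) to construct the stream of $ys\,n$ from that of $xs(M(4)\sqcup n)$. Without these auxiliary stream-transforming lemmas (or a substitute for them), the coinductive step cannot be closed, so as it stands the proposal does not constitute a proof.
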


\subsection{Direct approach}
The following approach was already considered in \cite{ediss28929} and is adjusted here to our setting.
\begin{lem}[\texttt{CoINegToCoIPlusOne}, \texttt{CoIPosToCoIMinusOne}]\label{Lem:CoINegToCoIPlusOne}
\begin{align*}
\forall_{x\in\coi} (x\leq 0 \rightarrow {^{co}\textbf{I}}(x+1))\\
\forall_{x\in\coi}( 0\leq x \rightarrow {^{co}\textbf{I}}(x-1))
\end{align*}
\end{lem}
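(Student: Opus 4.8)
The plan is to prove the first statement by coinduction, i.e.\ by an application of \eqref{eq:coind} to the competitor predicate
\[
Pz := \exists_{x'\in\coi}\left(x'\leq 0 \wedge z = x'+1\right),
\]
and to obtain the second statement by an entirely symmetric argument with competitor $\exists_{x'\in\coi}(0\leq x' \wedge z = x'-1)$ (alternatively, if $\coi$ is already known to be closed under negation, the second follows from the first via $x-1 = -((-x)+1)$). So let $x\in\coi$ with $x\leq 0$. Since $P(x+1)$ holds (take $x' := x$), by \eqref{eq:coind} it remains to verify the coinduction step: for every $z$ with $Pz$ produce $d\in\mathbf{Sd}$ and $z'\in\coi\cup P$ with $|z'|\leq 1$ and $z=\frac{d+z'}{2}$.

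So assume $z=x'+1$ with $x'\in\coi$ and $x'\leq 0$. Applying $\coi^-$ to $x'$ gives $e\in\mathbf{Sd}$ and $x''\in\coi$ with $|x''|\leq 1$ and $x'=\frac{e+x''}{2}$, hence $z=\frac{(e+2)+x''}{2}$. I would then split on the three values of $e$. If $e=\overline 1$, then $z=\frac{1+x''}{2}$, so take $d:=1$ and $z':=x''$ ($z'\in\coi$, $|z'|\leq 1$ directly). If $e=0$, then $x'=\frac{x''}{2}\leq 0$ forces $x''\leq 0$, hence $-1\leq x''\leq 0$ and thus $0\leq x''+1\leq 1$; take $d:=1$ and $z':=x''+1$, using $z=\frac{1+(x''+1)}{2}$, $|z'|\leq 1$, and $z'\in P$ (witnessed by $x''\in\coi$ with $x''\leq 0$). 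If $e=1$, then $x'=\frac{1+x''}{2}\leq 0$ forces $x''\leq -1$, so $x''=-1$ and $x'=0$, whence $z=1$; take $d:=1$ and $z':=1=\frac{1+1}{2}$, where $z'\in P$ is witnessed by $x'=0\in\coi$ with $0\leq 0$ and $1=0+1$. Computationally the extracted program reads the leading digit of the stream for $x'$, emits a $1$, and then either appends the tail ($e=\overline 1$), recurses on the tail ($e=0$, a carry), or keeps emitting $1$'s ($e=1$, the degenerate case $x=0$).

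The main obstacle is getting the competitor predicate right: in the carry case $e=0$ the new argument is $x''+1$, and the side condition $|x''+1|\leq 1$ that is needed to re-enter $\Phi$ is available only because the hypothesis ``$x'\leq 0$'' has been folded into $P$; without it, $x''+1$ could leave $[-1,1]$. The remaining subtlety is the boundary case $e=1$, where ``$x'\leq 0$'' together with $|x''|\leq 1$ pins down $x''=-1$ and $x'=0$, so that $z=1$ and the corecursion must (and does) produce the constant $1$-stream. The dual lemma is handled the same way, with the roles of $\overline 1$ and $1$ interchanged.
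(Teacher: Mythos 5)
Your proof is correct and follows essentially the same route as the paper: coinduction with the competitor $Pz:=\exists_{x'\in\coi}(x'\leq 0\wedge z=x'+1)$, destructing the stream of $x'$, splitting on its head digit, and making the same digit/tail choices in the cases $\overline 1$ and $0$ (with the dual statement handled symmetrically). The only, harmless, deviation is the boundary case $e=1$: the paper terminates the corecursion by taking $x':=1$ and invoking a separately proven $1\in\coi$ (\texttt{CoIOne}), whereas you re-enter the competitor $P$ with the original witness, so your extracted program loops on the unchanged stream and emits the constant $1$-stream --- the same output, produced inside the corecursion instead of by an auxiliary constant.
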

\begin{proof}
Since the proofs are very similar, we only prove the first formula.
We use \eqref{eq:coind} with $Px:= \exists_{y\in\coi}\left( y\leq 0 \wedge y+1 = x \right)$. We need to prove the second premise,
namely
\[
\forall_x\left(Px \rightarrow\exists_{d\in\mathbf{Sd},x'}\left(x'\in\left({^{co}\textbf{I}}\cup P\right)\wedge|x'|\leq 1\wedge x=\dfrac{d+x'}{2}\right)\right)
\]
Let $x\in P$, $y\in{^{co}\textbf{I}}$ with $y\leq 0$ and $y+1=x$ be given. Our goal is 
\[
\exists_{d\in\mathbf{Sd},x'}\left(x'\in\left({^{co}\textbf{I}}\cup P\right)\wedge|x'|\leq 1\wedge x=\dfrac{d+x'}{2}\right).
\]
From $y\in{^{co}\textbf{I}}$ we get $e$ and $y'$ with $e\in\textbf{Sd}$, $y'\in\coi$, $|y'|\leq 1$ and $y=\frac{e+y'}2$. We make a case distinction on $\textbf{Sd}\ e$:\\
If $e = -1$, we define $d:=1\in\mathbf{Sd}$ and $x':=y'$. Then $|x'|\leq 1$ and $x'\in \coi$ by definition. Furthermore we have $$x=y+1=\frac{-1+y'}2+1 = \frac{1+y'}2=\frac{d+x'}{2}.$$
If $e=0$, we define $d:=1$ and $x':=y'+1$. In this case we prove $Px'$, namely we show $y'\in\coi$, $y'\leq 0$ and $x'=y'+1$. We only need to prove $y'\leq 0$ which follows directly from $y\leq 0$ and $y=\frac{0+y'}2$.\\
The last case is $e=1$. Because of $y\leq 0$, $y=\frac{-1+y'}{2}$ and $|y'|\leq 1$, this is only possible if $y$ is equal to $0$, and therefore $x=1$. Hence we define $d:=1$ and $x':=1$. Then  $x=\frac{d+x'}2$ and $x'=1\in\coi$ is easily proven by coinduction. (For details we refer to the Minlog implementation of the theorem \texttt{CoIOne} in \texttt{examples/analysis/sddiv.scm}.)
\end{proof}
A realizer of the first formula is a function \texttt{f}, which takes a signed digit stream of a real number $x$ and returns a signed digit stream of $x+1$ if $x\leq 0$. 
The extracted term of the proof of the first statement translated to Haskell is
\begin{verbatim}
  cCoINegToCoIPlusOne :: (Str -> Str)
  cCoINegToCoIPlusOne u0 = aiCoRec u0 
                             (\ u1 -> (case (hd u1) of 
		                                { SdR -> (SdR , (Left cCoIOne)) ;
			                                  SdM -> (SdR , (Right (tl u1))) ;
			                                  SdL -> (SdR , (Left (tl u1))) })),  
\end{verbatim}
where
\begin{verbatim}
  cCoIOne :: Str
  cCoIOne = (aiCoRec () (\ g -> (SdR , (Right ()))))
\end{verbatim}
is the stream representing $1$. Unfolding \texttt{strCoRec} once yields
\texttt{cCoIOne = C SdR cCCoIOne}, i.e.~it is a constant stream of \texttt{SdR}.

Another way to characterise this function $\texttt{f}$ is to give its computation rules:
\begin{align*}
\texttt{f} (\mathrm{C}\overline{1}v) &:= \mathrm{C}1v \\
\texttt{f} (\mathrm{C}0v) &:= \mathrm{C}1(\texttt{f}v)  \\
\texttt{f} (\mathrm{C}1v) &:= [1,1,\dots] 
\end{align*}
Analogously as extracted term of the second statement of this lemma, we get a function $\texttt{g}:\texttt{Str}\rightarrow\texttt{Str}$ which is characterised by the rules
\begin{align*}
\texttt{g} (\mathrm{C}\overline{1}v) &:= [\overline{1},\overline{1},\dots] \\
\texttt{g} (\mathrm{C}0v) &:= \mathrm{C}\overline{1}(\texttt{g}v)  \\
\texttt{g} (\mathrm{C}1v) &:= \mathrm{C}\overline{1}v.
\end{align*}
It takes a signed digit stream of a real $x$ and returns a signed digit stream of $x-1$ if $0\leq x$.\\
Using this lemma, we are now able to prove the following lemma:
\begin{lem}[\texttt{CoIToCoIDouble}]\label{Lem:CoIDouble}
\begin{align*}
\forall_{x\in\coi} \left(|x|\leq \frac 12 \rightarrow 2x\in\coi\right)
\end{align*}
\end{lem}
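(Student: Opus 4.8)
The plan is not to set up a fresh coinduction but to peel off one layer of the hypothesis and hand the remaining work to Lemma~\ref{Lem:CoINegToCoIPlusOne}. So first I would assume $x\in\coi$ with $|x|\leq\frac12$ and apply $\coi^-$ to $x$, obtaining $d\in\mathbf{Sd}$ and $x'\in\coi$ with $|x'|\leq 1$ and $x=\frac{d+x'}2$; equivalently $2x=d+x'$. Since the hypothesis $|x|\leq\frac12$ carries no computational content, the only case distinction that matters computationally is the one on the digit $d$, which I carry out next.

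For $d=0$ we have $2x=x'$, so $2x\in\coi$ follows from $x'\in\coi$ by compatibility (Lemma~\ref{Lem:CoICompat}). For $d=1$ we have $2x=x'+1$; from $|x|\leq\frac12$ we get $2x\leq 1$, hence $x'\leq 0$, and applying the first formula of Lemma~\ref{Lem:CoINegToCoIPlusOne} to $x'\in\coi$ and $x'\leq 0$ yields $(x'+1)\in\coi$, i.e.~$2x\in\coi$. The case $d=\overline{1}$ is symmetric: $2x=x'-1$, from $|x|\leq\frac12$ we get $-1\leq 2x$, hence $0\leq x'$, and the second formula of Lemma~\ref{Lem:CoINegToCoIPlusOne} gives $(x'-1)\in\coi$.

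There is essentially no hard step here; the only point requiring a little care is that the boundary cases $2x=\pm 1$ are already absorbed into the previous lemma (through its internal use of the constant stream $\texttt{cCoIOne}$), so no separate argument is needed, and one does not need a new coinductive invariant for $2x$ itself. On the computational side the extracted term should therefore read the head digit $d$ of the input stream and then dispatch to $\texttt{cCoIPosToCoIMinusOne}$ when $d=\overline 1$, to $\texttt{cCoINegToCoIPlusOne}$ when $d=1$, and to the identity on the tail when $d=0$, which is exactly the structure of the case analysis above.
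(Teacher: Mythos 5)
Your proof is correct and follows essentially the same route as the paper: one application of $\coi^-$, a case distinction on the digit $d$, with the $d=\pm 1$ cases handled by the two parts of Lemma~\ref{Lem:CoINegToCoIPlusOne} and the $d=0$ case by (tacit) compatibility. The extracted term you describe coincides with the one given in the paper.
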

\begin{proof}
We apply $\coi^-$ and get $d\in\sd$, $x'\in\coi$ with $|x'|\leq 1$ and $x=\frac{d+x'}{2}$. We distinguish cases on $d\in\sd$:\\
$d=1$: Then $2x-1=x'$ and $|x|\leq \frac 12$ which imply $x'\leq 0$. By the first part of Lemma \ref{Lem:CoINegToCoIPlusOne} $1+x'=2x\in\coi$.\\
$d=-1$: As the first case but with the second part of Lemma \ref{Lem:CoINegToCoIPlusOne}.\\
$d=0$: In this case $2x=x'$ and $x'\in\coi$ by assumption.
\end{proof}
In Haskell notation the extracted term is given by
\begin{verbatim}
  cCoIToCoIDouble :: (Str -> Str)
  cCoIToCoIDouble u0 = case (hd u0) of
      { SdR -> (cCoINegToCoIPlusOne (tl u0)) ;
        SdM -> (tl u0) ;
        SdL -> (cCoIPosToCoIMinusOne (tl u0)) }
	
\end{verbatim}
Again we give a more readable characterisation of the extracted term \texttt{D} by the computation rules
\begin{align*}
\texttt{D}(\mathrm{C}\overline{1}u)&:=\texttt{g}u\\
\texttt{D}(\mathrm{C}0u)&:=u\\
\texttt{D}(\mathrm{C}1u)&:=\texttt{f}u,
\end{align*}
where \texttt{f}, \texttt{g} are the computational content of the previous lemma.
The following lemma is a special case of 
\[
\forall_{x\in\coi,y\in\coi}\frac{x+y}{2}\in\coi.
\]
This theorem is implemented as the theorem \texttt{Average} in \texttt{examples/analysis/average.scm} of Minlog and was considered in \cite{BergerSeisenberger12,MiyamotoSchwichtenberg15}. But here we give a direct proof of a special case because it is instructive and elementary.
\begin{lem}[Special case of \texttt{CoIAverage}]\label{Lem:CoIAvHalf}
\begin{align*}
\forall_{x\in\coi}\left(\frac x2 \pm \frac 14 \right)\in\coi
\end{align*}
\end{lem}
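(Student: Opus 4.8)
The plan is to prove $\frac{x}{2} + \frac14 \in \coi$ and $\frac{x}{2} - \frac14 \in \coi$ simultaneously, since the two cases are symmetric (swap $\overline{1}\leftrightarrow 1$ throughout). I would work with the $+\frac14$ case and note at the end that the other is analogous. The key observation is that $\frac{x}{2} + \frac14 = \frac{0 + (x + \frac12)}{2}$, so by Lemma~\ref{Lem:CoIClosure} it suffices to show $x + \frac12 \in \coi$, but that real need not have absolute value $\leq 1$. So instead I would unfold $x$ one step first: apply $\coi^-$ to $x\in\coi$ to obtain $d\in\sd$, $x'\in\coi$ with $|x'|\leq 1$ and $x = \frac{d+x'}{2}$. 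Then $\frac{x}{2} + \frac14 = \frac{x + \frac12}{2} = \frac{\frac{d+x'}{2} + \frac12}{2} = \frac{(d+1) + x'}{4} = \frac{1}{2}\cdot\frac{(d+1)/2 \,+\, x'/2}{1}$; more usefully, $\frac{x}{2}+\frac14 = \frac{\,d' + (\tfrac{x'}{2} \pm \tfrac14)\,}{2}$ for a suitable digit $d'$ depending on $d$, which sets up a corecursion.

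**Key steps.**
First I would set up the coinductive proof via \eqref{eq:coind} with the competitor predicate
\[
Pz := \exists_{x\in\coi}\Bigl(z = \tfrac{x}{2} + \tfrac14 \;\vee\; z = \tfrac{x}{2} - \tfrac14\Bigr),
\]
combining both halves of the lemma into one invariant so that the corecursion can ping-pong between them. To discharge the second premise, assume $Pz$, pick $x\in\coi$ with (say) $z = \frac{x}{2}+\frac14$, apply $\coi^-$ to get $d\in\sd$, $x'\in\coi$, $|x'|\leq1$, $x=\frac{d+x'}{2}$, and do a case distinction on $d$:
\begin{itemize}
\item if $d = 1$: then $z = \frac{x}{2}+\frac14 = \frac{(x'/2 + 1/4) + 1}{2}$, wait — rather $z = \frac{1 + (x'/2 + 1/4)}{2}$ after recomputing, so output digit $1$ and continue with $x'/2 + 1/4 \in P$ (using $x'\in\coi$);
\item if $d = 0$: then $z = \frac{x}{2}+\frac14 = \frac{0 + (x'/2 + 1/4)}{2}$, wait, $= \frac{1/2 + x'/4}{... }$; the clean split gives output digit $0$ and continuation $x'/2+1/4\in P$, or possibly digit $1$ with continuation $x'/2 - 1/4 \in P$ — I would compute which;
\item if $d = \overline{1}$: then $z = \frac{\overline{1}/2 + x'/2 + 1/4}{2} = \frac{0 + (x'/2 - 1/4)}{2}$ up to the arithmetic, so output digit $0$ and continue with $x'/2 - 1/4 \in P$ (the $-\frac14$ branch).
\end{itemize}
In each case one must check $|x'/2 \pm \frac14| \leq 1$, which is immediate from $|x'|\leq1$, and that the claimed equation holds, a routine rational identity. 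The $-\frac14$ half of $P$ is handled by the mirror-image case analysis.

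**Main obstacle.**
The real work is bookkeeping the arithmetic to pin down, for each digit $d$ and each sign $\pm$, exactly which output digit $d'$ and which continuation real (and which sign $\pm'$) make $\frac{x}{2}\pm\frac14 = \frac{d' + (\frac{x'}{2}\pm'\frac14)}{2}$ an identity with the continuation again satisfying the $|\cdot|\leq1$ bound — this is where the $\pm$-signs get coupled and where an off-by-$\frac12$ error would break the proof. I expect each of the six combinations to reduce to a one-line check of the form $\frac{d}{4} + \frac14 = \frac{d'}{2} \pm' \frac14 \cdot \frac12$ (schematically), so no combination is deep, but getting all six right is the crux; there is no analytic difficulty, only the need to choose the invariant $P$ broad enough to be closed under the corecursion step, which is exactly why I fold both $\pm\frac14$ cases into a single predicate from the start.
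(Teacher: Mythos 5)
There is a genuine gap: the corecursion step you propose cannot be discharged, and the concrete identities in your three cases are each off by $\frac18$. Writing $x=\frac{d+x'}{2}$, the number to decompose is $\frac x2+\frac14=\frac{d+1+x'}{4}$. If the continuation is to have the form $\frac{x'}{2}\pm'\frac14$ (so that it stays inside your predicate $P$), the required equation $\frac{d+1+x'}{4}=\frac12\bigl(d'+\frac{x'}{2}\pm'\frac14\bigr)$ simplifies to $\frac d4+\frac14=\frac{d'}{2}\pm'\frac18$, i.e.\ $2d+2=4d'\pm'1$, which fails for parity reasons no matter which $d\in\sd$, $d'\in\sd$ and sign you pick; the $-\frac14$ half gives $2d-2=4d'\pm'1$, the same mismatch. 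So no bookkeeping of the six combinations can succeed: with your invariant the intended one-digit-in/one-digit-out ``ping-pong'' corecursion does not exist. Concretely, for $d=1$ the true value is $\frac12+\frac{x'}{4}$ while $\frac12\bigl(1+\frac{x'}{2}+\frac14\bigr)=\frac58+\frac{x'}{4}$; for $d=0$ the value is $\frac14+\frac{x'}{4}$ while your two candidate splits give $\frac18+\frac{x'}{4}$ and $\frac38+\frac{x'}{4}$; for $d=\overline{1}$ the value is $\frac{x'}{4}$, not $\frac{x'}{4}-\frac18$.

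What does work, and is the paper's proof, is to let the continuation land in $\coi$ itself rather than in $P$: after one unfold of $x$ one has $\frac x2+\frac14=\frac12\bigl(1+\frac{x'}{2}\bigr)$ for $d=1$, $\frac12\cdot\frac{1+x'}{2}$ for $d=0$, and $\frac12\cdot\frac{x'}{2}$ for $d=\overline{1}$; that is, the result is exactly $x'$ with the two digits $10$, $01$, $00$ (respectively) prepended, so two applications of Lemma~\ref{Lem:CoIClosure} close each case and no appeal to the coinduction axiom \eqref{eq:coind} is needed at all. (Equivalently, if you insist on \eqref{eq:coind}, the step always takes the $\coi$ branch of $\coi\cup P$, which is the same proof in heavier clothing; extracted, it is the finite stream transformer $q^+$ with $1u\mapsto 10u$, $0u\mapsto 01u$, $\overline{1}u\mapsto 00u$.) A genuine carry-passing corecursion of the kind you sketch is what the full \texttt{CoIAverage} theorem needs, with a carry ranging over more values and a two-digit lookahead; in this special case the carry cancels after a single input digit, which is precisely why the elementary argument suffices.
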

\begin{proof}Applying
$\coi^-$ yields $x'\in \coi$ and $d \in \textbf{Sd}$ with $x=\frac{d+x'}{2}$. 
We show only ${^{co}\textbf{I}}\left( \frac x2 + \frac 14 \right)$, the other case is proven analogously. We distinguish cases on $d\in\sd$: \\
$d=1$: Then $ \frac x2 +\frac 14 = \frac{2+x'}4 = \frac 12(1+\frac{x'}2) $. \\
$d=0$: Then $\frac x2 +\frac 14 = \frac {1+x'}4 = \frac 12\frac{1+x'}2$.\\
$d=-1$: Then $\frac x2 +\frac 14=\frac {x'}4=\frac 12\frac {x'}2$.\\
In each case we apply Lemma \ref{Lem:CoIClosure} twice to get $\left(\frac x2 +\frac 14\right)\in\coi$
\end{proof}
We denote the extracted term of the proven statement by $q^+$. From the proof and the fact that the extracted term of Lemma \ref{Lem:CoIClosure} is given by $\mathrm{C}$, one easily sees that $q^+$ has the following computation rules:
\begin{align*}
q^+(\overline{1}u)&:= 0 0 u\\
q^+(0u)&:= 0 1 u\\
q^+(1u)&:= 1 0 u
\end{align*}
Analogously, the extracted term $q^-$ of the statement $\forall_x.{^{co}\textbf{I}}x  \rightarrow {^{co}\textbf{I}}\left( \frac x2 - \frac 14 \right)$ is characterised by
\begin{align*}
q^-(\overline{1}u)&:= \overline{1} 0 u\\
q^-(0u)&:= 0 \overline{1} u\\
q^-(1u)&:= 0 0 u.
\end{align*}
In the direct proof of \texttt{sdlim} below we will make use of the following case-distinction. To shorten the extracted term we outsource this case-distinction into a separate lemma.
\begin{lem}[\texttt{TripleCases}]
	\label{lem:Triple}
	For $x\in\coi$
	\[
		x\in \left[\dfrac 18,1\right]\vee
		x\in \left[-1,-\frac 18\right]\vee 
		x\in \left[-\frac 14,\frac 14\right]
	\]
\end{lem}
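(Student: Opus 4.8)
The plan is to exploit the fact that the three target intervals $[\tfrac18,1]$, $[-1,-\tfrac18]$ and $[-\tfrac14,\tfrac14]$ cover $[-1,1]$ with \emph{genuine} overlaps around $\pm\tfrac18$, so that the required disjunction is decidable from only finitely many leading digits of $x$. From $x\in\coi$ we have $|x|\le 1$ (via $\coi^-$) and, should we want it, also $x\in\R$ by Theorem~\ref{thm:StrToCs}. Note that one digit does not suffice ($d_1=0$ only gives $x\in[-\tfrac12,\tfrac12]$, which straddles all three intervals), so the case tree must look a little deeper.

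For a direct proof I would apply $\coi^-$ twice to write $x=\tfrac{d_1}2+\tfrac{d_2}4+\tfrac{x''}4$ with $|x''|\le 1$, and then case split on $(d_1,d_2)\in\sd\times\sd$. In five of the nine cases the radius-$\tfrac14$ neighbourhood of $c_2:=\tfrac{2d_1+d_2}4$ already lies inside one of the three intervals: $(1,1)$ and $(1,0)$ land in $[\tfrac18,1]$, $(\overline{1},\overline{1})$ and $(\overline{1},0)$ land in $[-1,-\tfrac18]$, and $(0,0)$ lands in $[-\tfrac14,\tfrac14]$. In the remaining four cases, $(d_1,d_2)\in\{(1,\overline{1}),(0,1)\}$ gives $x\in[0,\tfrac12]$ and $(d_1,d_2)\in\{(\overline{1},1),(0,\overline{1})\}$ gives $x\in[-\tfrac12,0]$; here I would peel off one more digit $d_3$ via $\coi^-$ and finish with a three-way split — e.g.\ in the first situation $d_3\in\{0,1\}$ forces $x\in[\tfrac18,1]$ and $d_3=\overline{1}$ forces $x\in[-\tfrac14,\tfrac14]$, and symmetrically on the negative side. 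This keeps the extracted term a small combinatorial function on streams, which is presumably why the case distinction is outsourced into its own lemma.

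Alternatively, and with a much shorter proof, one can invoke Lemma~\ref{thm:ApproxSplit}: applied to $\tfrac18<\tfrac14$ it yields $\tfrac18\le x$ (hence $x\in[\tfrac18,1]$, using $x\le 1$) or $x\le\tfrac14$; in the latter case, applied to $-\tfrac14<-\tfrac18$ it yields $-\tfrac14\le x$ (hence $x\in[-\tfrac14,\tfrac14]$) or $x\le-\tfrac18$ (hence $x\in[-1,-\tfrac18]$, using $-1\le x$). The price is that the extracted term then routes through the Cauchy representation via Theorem~\ref{thm:StrToCs} rather than staying on the stream.

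The only real work in either version is the arithmetic bookkeeping: checking that $\tfrac18$ and $\tfrac14$ (respectively two or three leading digits) give enough resolution for the tail's interval to be captured by one of the three closed intervals, and that the three intervals exhaust $[-1,1]$. I expect no genuine obstacle here; the non-strict $\le$ produced by Lemma~\ref{thm:ApproxSplit} matches the closed intervals in the statement, and in the direct version every leaf of the tree reduces to a finite inequality on rationals.
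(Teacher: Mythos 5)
Your direct version is essentially the paper's proof: the paper applies $\coi^-$ three times unconditionally, writes $x=\frac{4d_1+2d_2+d_3+y'}8$ and settles the disjunction by a $17$-fold case distinction on $(d_1,d_2,d_3)$, which is exactly your case tree except that you read the third digit lazily, only in the four ambiguous two-digit patterns $(1,\overline{1}),(0,1),(\overline{1},1),(0,\overline{1})$; the arithmetic in your leaves checks out, and the only payoff of your refinement is a lookahead of $2$ instead of $3$ in the majority of cases. Your alternative via Lemma~\ref{thm:ApproxSplit} (split at $\frac18<\frac14$, then at $-\frac14<-\frac18$) is correct but genuinely different from the paper, which deliberately avoids it: the whole point of outsourcing this case distinction is to keep the direct proof of Theorem~\ref{Thm:CoILim} operating on streams only, whereas the \texttt{ApproxSplit} route converts to the Cauchy representation via Theorem~\ref{thm:StrToCs} (and needs instances beyond the two special cases the paper actually proves), buying a much shorter proof at the cost of reintroducing rational arithmetic into what is meant to be the purely stream-based algorithm.
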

\begin{proof}
	Triple application of ${^{co}\textbf{I}}^-$ to $x\in\coi$ gives $d_1,d_2,d_3\in \textbf{Sd}$ and $y'\in {^{co}\textbf{I}}$ such that $x=\frac{4d_1+2d_2+d_3+y'}8$. The claim follows by case-distinction on $d_1,d_2$ and $d_3$. Namely writing $x=d_1d_2d_3y'$ we have
	\[
	\begin{drcases}
		11d_3y'\\
		10d_3y'\\1\overline{1}1y'\\
		1\overline{1}0y'\\
		011y'\\
		010y'
	\end{drcases}\to  \frac{1}{8}\leq x\qquad
	\begin{drcases}
		\overline{11}d_3y'\\
		\overline{1}0d_3y'\\
		\overline{1}1\overline{1}y'\\
		\overline{1}10y'\\
		0\overline{11}y'\\
		0\overline{1}0y'
	\end{drcases}\to x\leq -\frac 18\qquad
	\begin{drcases}
		00d_3y'\\
		\overline{1}11y'\\
		1\overline{11}y'\\
		01\overline{1}y'\\
		0\overline{1}1y'
	\end{drcases}\to -\frac 14\leq x\leq \frac 14.
	\]
\end{proof}
We omit the extracted term in Haskell here since it is quite long and unreadable due to the $17$ case-distinctions. The computational content is basically the diagram in the proof above.

With these preparations we are now able to give the direct proof of Theorem \ref{Thm:CoILim}.
\begin{proof}[Proof.(\,$\mathtt{SdLim}$, direct)]
We show that
\begin{align*}
\forall_x\left(\exists_{xs;M\in\textbf{Mon}}\left(\forall_{n\in \total}(xs\ n)\in\coi\wedge \forall_{p\in\total}\forall_{n\geq Mp}|x-(xs\ n)|\leq 2^{-p} \right)\rightarrow {^{co}\textbf{I}}x\right),
\end{align*}
which is equivalent. We use \eqref{eq:coind} with $P$ the premise of the formula above:
\[
Px := \exists_{xs;M\in\textbf{Mon}}\left(\forall_{n\in \total}(xs\ n)\in\coi\wedge \forall_{n\geq M(p)}|x-(xs\ n)|\leq 2^{-p} \right)
\]
Again, we need to prove the second premise, namely
\[
\forall_x\left(P x\to \exists_{d\in\sd,x'}\left(x'\in\left({^{co}\textbf{I}}\cup P\right)\wedge|x'|\leq 1\wedge x=\dfrac{d+x'}{2}\right)\right)
\]
So let $x$, $xs$ and $M\in \mathbf{Mon}$ be given and assume $\forall_{n\in \textbf{T}}(xs\ n)\in\coi$ and $\forall_{p\in \total}\forall_{n\geq M(p)}|(xs\ n)-x|\leq 2^{-p}$. We use the lemma above with $(xs (M 4))\in\coi$ and get three cases: 

(i) $\frac 18\leq xs(M(4))$. In this case we choose 
\[
d:=1\ \text{and}\ x':=2x-1.
\]
Then $|x'|\leq 1$ and $x=\frac{d+x'}2$ follow directly. We show that $Px'$, so  we define 
\[
ys\ n:= 2(xs((M(4))\sqcup n))-1,
\]
where $m\sqcup l := \max\{m,l\}$ and $N(p):=M(p+1)\in\mathbf{Mon}$.
The statement $\forall_{n\geq N p}|ys(n)-x'|\leq 2^{-p}$ is a direct consequence of $\forall_{n\geq M(p)}|xs(n)-x|\leq 2^{-p}$ and it remains to show $\forall_{n\in \total}(ys\ n)\in \coi$.
We calculate 
\[
ys\ n=4\left( \frac{xs(M(4)\sqcup n)}2 - \frac 14 \right),
\]
and conclude $\frac 14(ys\ n)\in\coi$ by Lemma \ref{Lem:CoIAvHalf}. Furthermore, we have $xs(M(4))\geq \frac 18$ and $\forall_{n\geq M(4)}|xs(n)-x|\leq \frac{1}{16}$ and therefore
 \begin{align*}
xs(M(4)\sqcup n) &= (xs(M(4)\sqcup n)-x)+(x -xs(M(4)))+xs(M(4)) \\ &\geq -\frac 1{16} -\frac 1{16} + \frac 18 =0.
\end{align*}
Hence $0 \leq xs(M(4)\sqcup n)\leq 1$, which implies $\left|\frac{xs(M(4)\sqcup n)}2 - \frac 14\right| \leq \frac 14$ and by double application of Lemma \ref{Lem:CoIDouble} we finally get $ys\ n\in{^{co}\textbf{I}}$. 

(ii) $xs(M(4))\leq -\frac 18$. In this case we define
\[
d:=-1,\quad x':=2x+1,\quad ys\ n:=(2xs(M(4)\sqcup n)+1),\quad Np:=M(p+1).
\] 
The proof in this case is analogous to the proof of the first case.

(iii) $-\frac{1}{4} \leq f(M(4)) \leq \frac 14$. We define 
\[
d:=0\quad \text{and}\quad  x':=2x.
\]
Again we show $P x'$, namely
\[
\exists_{ys;N\in\textbf{Mon}}\left(\forall_{n\in \total}(ys\ n)\in\coi\wedge \forall_{n\geq Np}|x'-(ys\ n)|\leq 2^{-p} \right).
\]
To this end we define
\[
ys\ n:=2xs(M(4)\sqcup n)\quad \text {and}\quad N p:=M(p+1).
\]
Again, the right side of the conjunction follows from the assumptions. For the left side consider
\[
	|2(ys\ n)|=|xs(M(4)\sqcup n)|\leq |xs(M(4)\sqcup n)-x| + |x - xs(M(4))| +|xs(M(4))|\leq \frac 12,
\]
which implies $(ys\ n)\in\coi$ by Lemma \ref{Lem:CoIDouble}. 
\end{proof}
The extracted term is
\begin{verbatim}
  coilim :: (((Pos -> Nat), (Nat -> Str)) -> Str)
  coilim (m,us0) = aiCoRec (m,us0) 
    (\ mus1 -> (case mus1 of
      { (,) m us -> (case (cTripleCases (us (m 4))) of
        { Left() -> (cSdLimCaseR m us) ; 
        	 Right(Left ()) ->	(cSdLimCaseL m us) ;
        	 Right(Right()) -> (cSdLimCaseM m us)}}.
 	\end{verbatim}
The terms \texttt{cSdLimCaseR},\texttt{cSdLimCaseL} and \texttt{cSdLimCaseM} are given by
\begin{verbatim}
  cSdLimCaseR :: ((Pos -> Nat) -> ((Nat -> Str) ->             
                   (Sd, (Either Str ((Pos -> Nat), (Nat -> Str))))))
  cSdLimCaseR m0 us1 = (SdR , 
    (Right ((\ p2 -> (m0 (p2 + 1))) , 
            (\ n2 -> (cCoIToCoIDoublePlusOne (us1 ((m0 3) + n2)))))))
	
  cSdLimCaseM :: ((Pos -> Nat) -> ((Nat -> Str) -> 
                   (Sd, (Either Str ((Pos -> Nat), (Nat -> Str))))))
  cSdLimCaseM m0 us1 = (SdM , 
    (Right ((\ p2 -> (m0 (p2 + 1))) , 
            (\ n2 -> (cCoIToCoIDouble (us1 ((m0 3) + n2)))))))
	
  cSdLimCaseL :: ((Pos -> Nat) -> ((Nat -> Str) -> 
                   (Sd, (Either Str ((Pos -> Nat), (Nat -> Ai))))))
  cSdLimCaseL m0 us1 = (SdL , 
    (Right ((\ p2 -> (m0 (p2 + 1))) , 
            (\ n2 -> (cCoIToCoIDoubleMinusOne (us1 ((m0 3) + n2)))))))
\end{verbatim}
In the following we will discuss the computational content, we will denote it by \texttt{Lim}, in more detail.
It has the type \begin{align*}
\texttt{Lim}:(\mathbb{Z}^+\rightarrow \mathbb{N})\rightarrow (\mathbb{N}\rightarrow \str)\rightarrow \str.
\end{align*}
It takes as inputs the modulus of convergence and the sequence of streams and returns the stream representing the limit. In order to give a more readable characterisation of \texttt{Lim}, we define the following sets
\begin{align*}
\textbf{R}&:= \{11v, 10v, 1\overline{1}1v, 1\overline{1}0v, 011v, 010v\mid v: \str \}\\
\textbf{M}&:=\{00v, \overline{1}11v, 1\overline{11}v, 01\overline{1}v, 0\overline{1}1v\mid v:\str \}\\
\textbf{L}&:= \{\overline{11}v, \overline{1}0v, \overline{1}1\overline{1}v, \overline{1}10v,0\overline{11}v, 0\overline{1}0v\mid v:\str\}.
\end{align*}
which correspond to the intervals from Lemma \ref{lem:Triple}.
According to the proof we then have the following rule for \texttt{Lim}:
\begin{align*}
\texttt{Lim}\ M\ F :=
\begin{cases}
     \mathrm{C}\ 1\ (\texttt{Lim}\ \lambda_pM(p+1)\ \lambda_n(\texttt{DD}q^-F(M(4)\sqcup n)))& \text{if } F(M(4))\in \textbf{R} \\
    \mathrm{C}\ 0\ (\texttt{Lim}\ \lambda_pM(p+1)\ \lambda_n(\texttt{D}F(M(4)\sqcup n)))& \text{if } F(M(4))\in \textbf{M} \\
    \mathrm{C}\ \overline{1}\ (\texttt{Lim}\ \lambda_pM(p+1)\ \lambda_n(\texttt{DD}q^+F(M(4)\sqcup n)))& \text{if } F(M(4))\in \textbf{L}
   \end{cases}
\end{align*}
The functions \texttt{D}, $q^+$ and $q^-$ are the computational content of the lemmas above.
Note that the definition of the new sequence is not unique. For reasons of efficiency one should be flexible with the choice of the new sequence, which is called $ys$ in the proof above. For example by choosing $ys$ one can replace $M(4) \sqcup n$ by $M(4)+n$. The efficiency depends on the concrete sequence. In the Minlog file we have chosen $M(4)+n$ because the proofs are simpler with the addition instead of the maximum.
\subsection{Indirect approach}
Now we redo the proof using translations between the Cauchy and sd-representation. First we state the completeness of Cauchy-reals, which will be used.
\begin{thm}[\texttt{RealComplete}]
	\label{thm:Completeness}
	Assume $xs$ and $M\in \mathbf{Mon}$ such that $\forall_{n\in\total} (xs\ n)\in\R$ and $\mathbf{Cauchy}(xs,M)$ then there exists $x\in\R$ such that $\mathbf{Conv}(xs,x,M)$.
\end{thm}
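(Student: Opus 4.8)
The plan is to diagonalise. Each member $xs\ n$ of the sequence comes with a rational Cauchy approximation, and the idea is to thread these approximations together into a single rational sequence converging to the sought-after limit, and then to read off that limit as an element of $\R$.

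First I would apply countable choice $\mathbf{CC}$ to the hypothesis $\forall_{n\in\total}(xs\ n)\in\R$. Unfolding Definition~\ref{def:Cauchy}, this assumption reads $\forall_{n\in\total}\exists_{as,N}\big(as\in\total\wedge N\in\mathbf{Mon}\wedge\forall_{p\in\total}\forall_{m\geq N(p)}|(xs\ n)-(as\ m)|<2^{-(p+1)}\big)$, so $\mathbf{CC}$ delivers a doubly indexed rational family $ass:\mathbb{N}\to\mathbb{N}\to\mathbb{Q}$ and a family of moduli $Ns$ with each $Ns\ n\in\mathbf{Mon}$ and $|(xs\ n)-(ass\ n\ m)|<2^{-(p+1)}$ whenever $m\geq Ns\ n\ p$. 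Since the realiser of $\mathbf{CC}$ is essentially the identity, this step only rearranges the data and adds nothing to the extracted term.

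Next I would build the diagonal rational sequence, for instance
\[
bs\ k:=ass\ (M(k+2))\ \big(Ns\ (M(k+2))\ (k+2)\big),
\]
so that $bs\ k$ lies within $2^{-(k+3)}$ of $xs\ (M(k+2))$. Using $\mathbf{Cauchy}(xs,M)$ together with monotonicity of $M$, a routine $3\varepsilon$-estimate shows that $(bs\ k)_k$ is a rational Cauchy sequence whose modulus $N$ is obtained from $M$ by a fixed additive shift, chosen to absorb the three error contributions $2^{-(j+3)}$, $2^{-(\min(j,k)+2)}$, $2^{-(k+3)}$ into the bound $2^{-(p+1)}$. From a rational Cauchy sequence with a monotone modulus one obtains a real $x\in\R$ represented by $(bs,N)$; on the abstract level this is the single genuinely completing step and the only place where any structure of $\mathbb{R}$ beyond the computation-free axioms is used (in the concrete model it is immediate, since reals just are such pairs).

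It remains to verify $\mathbf{Conv}(xs,x,M)$. Fixing $p$ and $n\geq M(p)$ I would estimate, for a suitably large $k$,
\[
|x-(xs\ n)|\leq |x-(bs\ k)|+|(bs\ k)-xs\ (M(k+2))|+|xs\ (M(k+2))-(xs\ n)|,
\]
controlling the first summand by $bs\to x$, the second by the defining estimate for $bs$, and the third by $\mathbf{Cauchy}(xs,M)$ and monotonicity of $M$ (which needs $k+2\geq p$). Because the target inequality is non-strict, the clean way to close it is to prove $|x-(xs\ n)|\leq 2^{-p}+2^{-j}$ for every $j$, letting $k$ grow with $j$, and then conclude $|x-(xs\ n)|\leq 2^{-p}$. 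The main obstacle is not conceptual but the bookkeeping of the several moduli and their shifts, so that every inequality closes with exactly the constants demanded by the definitions of $\R$, $\mathbf{Cauchy}$ and $\mathbf{Conv}$ while keeping the extracted program simple; a minor additional point is that all choices made are constructive, which is unproblematic here since $\mathbf{CC}$ carries no real computational content.
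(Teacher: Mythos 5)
Your proposal is correct and is essentially the approach behind the paper's result: the paper simply cites Theorem 2.3 of \cite{Schwichtenberg06e}, but its extracted realizer \texttt{cRealComplete} is exactly this diagonalisation through the doubly indexed rational family (with $\mathbf{CC}$ computationally trivial, as you note). The only difference is bookkeeping: you fold the convergence modulus $M$ into the index of the diagonal sequence ($bs\ k:=ass\,(M(k+2))\,(Ns\,(M(k+2))\,(k+2))$) so the limit's Cauchy modulus is a fixed shift, whereas the paper's term takes $as\ n:=as_n(M_n\ n)$ and instead absorbs $M$ into the limit's modulus $N p:=\max(M(p+1),p+2)$.
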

\begin{proof}
	We refer to Theorem 2.3 in \cite{Schwichtenberg06e}.
\end{proof}
The extracted term is:
\begin{verbatim}
  cRealComplete :: ((Nat -> Rea) -> ((Pos -> Nat) -> Rea))
  cRealComplete xs0 m1 = RealConstr 
                            (\ n -> (realSeq (xs0 n) 
                                       (realMod (xs0 n) (cNatPos n)))) 
                            (\ p -> ((m1 (p + 1)) `max` ((p + 1) + 1)))
\end{verbatim}
Note the following: Given witnesses $(as_n,M_n)_n$ to $\forall_{n\in\total}(xs\ n)\in\R$, the rational sequence witnessing the limit is given by $as\ n:=as_n(M_n\ n)$ and the Cauchy-modulus of the limit-real is given by $N p:= \mathtt{max}(M(p+1),p+2)$ where $M$ is the modulus of convergence.

As preparation for the indirect proof we state some elementary properties of limits and sequences that we will need but do not have any computational content.
\newpage
\begin{lem}
	Assume $xs$ is a sequence of reals, $x$ is another real and $M\in\mathbf{Mon}$ such that $\mathbf{Conv}(xs,x,M)$. Then we have
	\begin{enumerate}[(i)]
		\itemsep=0em
		\item $\mathbf{Cauchy}(xs,N)$, where $N p:=M(p+1)$,
		\item $\forall_{n\in \total} |(xs\ n)|\leq 1 \to |x|\leq 1$ and 
		\item $\forall_{y,N\in\mathbf{Mon}}(\mathbf{Conv}(xs,y,N) \to x=y)$.
	\end{enumerate}
\end{lem}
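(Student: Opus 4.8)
The plan is to prove all three items by elementary $\varepsilon$--estimates; none of them needs coinduction, and each is a Harrop formula over the abstract reals, so Minlog will assign it the trivial realiser.

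For (i) I would first note that $N\,p:=M(p+1)$ lies in $\mathbf{Mon}$: it is total because $M$ is, and it is monotone because $p\mapsto p+1$ is monotone and $M$ is monotone. For the Cauchy estimate, fix $p\in\total$ and $n,m\geq N(p)=M(p+1)$. Then $\mathbf{Conv}(xs,x,M)$ yields $|x-(xs\ n)|\leq 2^{-(p+1)}$ and $|x-(xs\ m)|\leq 2^{-(p+1)}$, so the triangle inequality gives
\[
|(xs\ n)-(xs\ m)|\leq|(xs\ n)-x|+|x-(xs\ m)|\leq 2^{-(p+1)}+2^{-(p+1)}=2^{-p},
\]
which is exactly $\mathbf{Cauchy}(xs,N)$.

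For (ii) I would avoid a proof by contradiction and instead use the constructive characterisation $|x|\leq 1\iff\forall_{p\in\total}\,|x|\leq 1+2^{-p}$. Assuming $\forall_{n\in\total}\,|(xs\ n)|\leq 1$, fix $p$ and choose any $n\geq M(p)$; then
\[
|x|\leq|x-(xs\ n)|+|(xs\ n)|\leq 2^{-p}+1,
\]
using $\mathbf{Conv}(xs,x,M)$ and the hypothesis. Since $p$ was arbitrary, $|x|\leq 1$. For (iii) I would use the analogous $\varepsilon$--characterisation of equality, $x=y\iff\forall_{p\in\total}\,|x-y|\leq 2^{-p}$. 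Given $\mathbf{Conv}(xs,y,N)$ in addition, fix $p$ and pick $n\geq\max(M(p+1),N(p+1))$; then $|x-(xs\ n)|\leq 2^{-(p+1)}$ and $|(xs\ n)-y|\leq 2^{-(p+1)}$, so $|x-y|\leq 2^{-p}$ by the triangle inequality, and therefore $x=y$.

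There is no genuine mathematical obstacle here; the only point that needs care is the bookkeeping that keeps these proofs computationally trivial, i.e.\ that they appeal only to non-computational facts about the abstract reals (the triangle inequality, monotonicity of $n\mapsto 2^{-n}$, and the $\varepsilon$--characterisations of $\leq$ and $=$) together with monotonicity and totality of $M$, so that the extracted terms are empty and these lemmas can be used silently in the indirect proof of Theorem \ref{Thm:CoILim}.
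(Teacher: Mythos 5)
Your proposal is correct and follows essentially the same route as the paper: triangle-inequality estimates at indices $\geq M(p+1)$ for (i), the bound $|x|\leq 2^{-p}+1$ for arbitrary $p$ for (ii), and combining the two moduli via $\max(M(p+1),N(p+1))$ to get $|x-y|\leq 2^{-p}$ for all $p$, hence $x=y$, for (iii). Your added remarks on the $\varepsilon$-characterisations of $\leq$ and $=$ and on keeping the lemma computationally trivial are exactly what the paper leaves implicit.
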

\begin{proof}
	(i) Follows directly by using the triangle inequality and the definitions.\\
	(ii) For arbitrary $p\in \total$ let $n:=M(p)$. Then $|x|\leq |x-xs(n)|+|xs(n)|\leq 2^{-p}+1$. As $p\in \total$ is arbitrary it follows $|x|\leq 1.$\\
	(iii) We have $xs(n)-x$ converges to $0$ with modulus $M$ and $xs(n)-y$ converges to $0$ with modulus $N$. Therefore, $x-y$ converges to zero with modulus $p\mapsto \max\{M(p+1),N(p+1)\}$. I.e.~$|x-y|\leq 2^{p}$ for all $p$ and therefore $x-y=0$.
\end{proof}

Using the lemmas above, the indirect proof of the convergence theorem becomes quite short:
\begin{proof}[Proof.(\,$\mathtt{SdLim}$, indirect)]
Assume $x$, $xs$, $M\in \mathbf{Mon}$, $\forall_{n\in \textbf{T}}(xs\ n)\in\coi$ and $\forall_{n\geq M(p)}|(xs\ n)-x|\leq 2^{-p}$. We apply Theorem \ref{thm:StrToCs} to $\forall_{n\in\total}(xs\ n)$ and get
\[
\forall_{n\in\total} xs\in\R\wedge |xs\ n|\leq 1.
\]
By the lemma above $xs$ is a Cauchy-sequence. So we apply 
Theorem \ref{thm:Completeness} to get $y\in\R$ with $\mathbf{Conv}(xs,y,N)$. By the above lemma $x=y\in\R$ and $|y|\leq 1$, so $x\in\coi$ by Theorem \ref{thm:cstostr} and \ref{Lem:CoICompat}.
\end{proof}
\noindent The extracted term for this proof is given by
\begin{verbatim}
  cCoILim :: ((Pos -> Nat) -> ((Nat -> Str) -> Str))
  cCoILim m g =  cCsToStr (cRealComplete 
                             (\ n -> (cStrToCs (g n))) 
                             (\ p -> (m (p + 1))))),
\end{verbatim}
i.e.~given a sequence $u_0u_1\dots$ of $\sd$-streams we apply $\mathtt{cRealComplete}$ to the sequence of translated stream $\mathtt{cStrToCs}(u_0)\mathtt{cStrToCs}(u_1)\dots$ and then translate the result back.
\subsection{Comparison} We now compare the two algorithms obtained by the direct and indirect method. To understand the results of the runtime-experiments we analyze the \textit{lookahead} of the algorithms first. Both limit algorithms have a sequence $F:\mathbb{N}\to\mathtt{Str}$ of streams and a modulus $M:\mathbb{Z}^+\to \mathbb{N}$ of convergence as inputs and they produce one output-stream. Here, the lookahead for some $n\in\mathbb{N}$ is given by two natural numbers $m_0,m_1\in\mathbb{N}$. Namely, to compute the first $n$ output digits we need the first $m_0$ digits of the first $m_1$ elements of $F$.

Unfolding $\mathtt{cRealComplete}$ in the definition of the indirect case leads to
\[
	\mathtt{cCoILim}(M,F)= \mathtt{cCsToStr} \left(\left(\sum_{i=1}^{n}\frac{(F(n))_i}{2^i}\right)_n,\lambda_pM(p+2)\right),
\]
where $\mathtt{cCsToStr}(as,M)$ compares $as(M\;3)$ with $\pm 0.25$. Hence, to compute the $n$-th digit of $\mathtt{cCoILim}(M,F)$ we need to examine the first $M(n+4)$ digits of $F(M(n+4))$. The algorithm in the direct case was given by
\begin{align*}
	\texttt{Lim}\ M\ F :=
	\begin{cases}
		\mathrm{C}\ 1\ (\texttt{Lim}\ \lambda_pM(p+1)\ \lambda_n(\texttt{DD}q^-F(M(4)\sqcup n)))& \text{if } F(M(4))\in \textbf{R} \\
		\mathrm{C}\ 0\ (\texttt{Lim}\ \lambda_pM(p+1)\ \lambda_n(\texttt{D}F(M(4)\sqcup n)))& \text{if } F(M(4))\in \textbf{M} \\
		\mathrm{C}\ \overline{1}\ (\texttt{Lim}\ \lambda_pM(p+1)\ \lambda_n(\texttt{DD}q^+F(M(4)\sqcup n)))& \text{if } F(M(4))\in \textbf{L}
	\end{cases}
\end{align*}
By examining the defining equations of $\mathtt{D},q^\pm$ one easily sees that all these functions needs at most the first $n+1$ digits of the input stream to compute the first $n$ digits of the output stream. For the first digit we need to decide whether $F(M(4))$ is in $\textbf{R},\textbf{M}$ or $\textbf{L}$ which requires the first three digits of $F(M(4))$, since we apply $\mathtt{cCoITripleClosure}$.  All in all, to compute the $n$-th digit of $\texttt{Lim}\ M\ F$ we need to examine the first $3n$ digits of $F(M(n+3))$. This follows as $M$ is monotone and hence $M(4)\sqcup \dots \sqcup M(n+3)=M(n+3)$.

As we can see, in the direct case the lookahead depends in a way linearly on the modulus $M$ of convergence, whereas in the indirect case it depends quadratically on $M$. Furthermore, if the modulus of convergence is asymptotically lower than $\lambda_n 3n$, the indirect algorithm should outperform the direct one.

As a first test we run both algorithms in Haskell on the constant sequence
$F:=\lambda_n u_0$ which converges with the constant modulus $\lambda_p 0$. Here $u_0$ is a pseudo-random stream of $\mathbf{Sd}$  generated with the Haskell $\mathtt{System.Random}$ package. To test the dependence of the two algorithms on the modulus of convergence we artificially set different moduli and compute different amounts of digits. All measurements are the average for $n=10$ tries with different random numbers in seconds.
\begin{figure}[ht]
	\begin{tabular}{c|ccc}
		Mod & 50 digits & 100 digits & 200 digits \\
		\hline
		$\lambda_pp$& 1.78 & 12.2 & 87\\
		$\lambda_pp^2$& 1.84 & 12.7 & 90\\
		$\lambda_pp^3$& 2.25 & 13.4 & 95\\
	\end{tabular}
\caption{First test - Constant sequences with different moduli for the direct algorithm}
\end{figure}
\begin{figure}[ht]
	\begin{tabular}{c|cccc}
		Mod & 10 digits & 20 digits & 50 digits & 100 digits \\
		\hline
		$\lambda_pp$& - & $<$ 0.05  & 0.075 & 0.21 \\
		$\lambda_pp^2$& 0.084  & 0.41 & 16.38 & 1140\\
		$\lambda_pp^3$& 4.3 & 503 & $>$1500 & -\\
	\end{tabular}\\
	\caption{Constant sequences with different moduli for the indirect algorithm}
\end{figure}\\
As a second experiment we take the geometric series $(x^n)_n$ for some $|x|\leq 0.5$. This is a Cauchy-sequence converging to $0$ with modulus $\lambda_pp$, since for $n\leq m$ and $|x|\leq0.5$ we have
\[
	|x^n-x^m|\leq |x^n||1-x^{n-m}|\leq \frac{1}{2^n}.
\] 
Again we generate pseudorandom sequences $u$ and here we put a $0$ in front to ensure that the absolute value is bounded by $0.5$. Then we run both algorithms for the sequence $F$ given by
\[
	F\ 0 := 0::u\qquad F(n+1)=\mathtt{cCoIMult}(0::u)(F\,n),
\]
where $\mathtt{cCoIMult}$ is the algorithm from \cite{schwichtenberg2021sdmult}. The results below are the average over $n=15$ tests. The direct algorithm did not terminate in a reasonable amount of time ($\leq 30$ minutes) for $n\geq30$ digits. As expected the indirect algorithm is better here, since the modulus of convergence is the identity here.
\begin{figure}[ht]
	\begin{tabular}{c|cc}
		digits & indirect & direct \\
		\hline
		5 & 0.74 & 0.69 \\
		10 & 3.3 & 23.4 \\
		20 & 26 & 1227 \\
		30 & 87 & $>$1500 \\
		40 & 239 & - \\
		50 & 502 & - \\
	\end{tabular}
	\caption{Second test - Geometric sequence}
\end{figure}

\section{Applications}
\label{sec:app}
\subsection{Heron's method}
To show an application of the two algorithms extracted in the last section, we define the Heron sequence and show that it converges to the square root.
\begin{defi}[Heron]\label{Def:Heron}
We define $\text{H}:\mathbb{R}\rightarrow \mathbb{N}\rightarrow \mathbb{R}$ by the computation rules
\[
\text{H}(x,0) := 1,\qquad
\text{H}(x,n+1):= \frac 12 \left( \text{H}(x,n)+\frac{x}{\text{H}(x,n)}\right).
\]
For every non-negative $x$ the sequence $H(x,\cdot) =: H(x):\mathbb{N}\rightarrow \mathbb{R}$ is the sequence, we get from Heron's method with initial value 1. Note that $H$ is well-defined for non-negative $x$ since $H(x,n)\geq 2^{-n}$.
\end{defi}
\begin{lem}\label{Lem:HeronConv}
For every $x\in [0,1]$ $H(x)$ converges to $\sqrt x$ with modulus $\iota:\mathbb{Z}^+\to \mathbb{N}$. Furthermore we have that
\[
\forall_{n\in\textbf{T}}\sqrt{x}\leq H(x,n).
\]
\end{lem}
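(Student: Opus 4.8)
The plan is to prove the two assertions separately, starting with the upper bound $\sqrt{x} \leq H(x,n)$, since the convergence statement will lean on it. For the inequality I would proceed by induction on $n$. The base case $n=0$ amounts to $\sqrt{x}\leq 1$, which holds for $x\in[0,1]$. For the induction step, assume $\sqrt{x}\leq H(x,n)$ and write $h := H(x,n)$, so that $h > 0$ (indeed $h \geq 2^{-n}$ by the remark in Definition~\ref{Def:Heron}). The key algebraic fact is that
\[
H(x,n+1) - \sqrt{x} = \frac{1}{2}\left(h + \frac{x}{h}\right) - \sqrt{x} = \frac{1}{2h}\left(h^2 - 2\sqrt{x}\,h + x\right) = \frac{(h-\sqrt{x})^2}{2h} \geq 0,
\]
which gives $\sqrt{x} \leq H(x,n+1)$ without even using the induction hypothesis — the arithmetic–geometric-mean inequality does all the work. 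So in fact the second assertion holds for all $n \geq 1$ directly, and the case $n=0$ is the separate observation $\sqrt{x}\leq 1$.

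For the convergence with modulus $\iota$ I must show $|\sqrt{x} - H(x,n)| \leq 2^{-p}$ for all $n \geq \iota(p) = p$; equivalently, since $H(x,n)\geq\sqrt{x}$, that $H(x,n) - \sqrt{x} \leq 2^{-n}$ for all $n$ (then take $n \geq p$). I would again induct on $n$, using the identity above. Writing $e_n := H(x,n) - \sqrt{x} \geq 0$, we have $e_{n+1} = e_n^2/(2H(x,n))$. Bounding $H(x,n) \geq \sqrt{x}$ is not enough when $x$ is tiny, so instead I would use $H(x,n) \geq 1$ is false in general too; the clean route is to observe $2H(x,n)\geq H(x,n)+\sqrt x\ge H(x,n)$ and separately $H(x,n)\geq 1/2\cdot(H(x,n)+x/H(x,n))$ — better: simply use $e_{n+1}=e_n\cdot\frac{e_n}{2H(x,n)}$ together with $e_n \leq H(x,n)$ (since $\sqrt{x}\geq 0$), giving $e_{n+1}\leq e_n/2$. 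Combined with $e_0 = 1 - \sqrt{x} \leq 1$, induction yields $e_n \leq 2^{-n}$, which is exactly what is needed.

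The genuinely delicate point is the well-definedness caveat $H(x,n) \geq 2^{-n}$ that makes the division in Definition~\ref{Def:Heron} legitimate and is used to guarantee $H(x,n) > 0$ throughout. This too is an easy induction: $H(x,0) = 1 \geq 1$, and if $H(x,n) \geq 2^{-n} > 0$ then $H(x,n+1) = \tfrac12(H(x,n) + x/H(x,n)) \geq \tfrac12 H(x,n) \geq 2^{-(n+1)}$ because $x \geq 0$. I expect the main obstacle to be purely bookkeeping: since this is a formalised development, the real work is routing all these elementary inequalities through the abstract axiomatisation of $\mathbb{R}$ and ensuring the reasoning is constructive — in particular that the case split implicit in ``$h - \sqrt{x}$ squared is non-negative'' needs no decidability of equality, which it does not. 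The mathematical content is entirely the three nested inductions above and the single algebraic identity $H(x,n+1) - \sqrt{x} = (H(x,n)-\sqrt{x})^2/(2H(x,n))$.
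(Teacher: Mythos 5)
Your proposal is correct and takes essentially the same route as the paper: both proofs hinge on the identity $H(x,n+1)-\sqrt{x}=\bigl(H(x,n)-\sqrt{x}\bigr)^2/\bigl(2H(x,n)\bigr)$, deduce $\sqrt{x}\leq H(x,n)$ from its non-negativity (with base case $1-\sqrt{x}\geq 0$), and then obtain $H(x,n)-\sqrt{x}\leq 2^{-n}$ from the halving bound $e_{n+1}\leq \tfrac12 e_n$, which is exactly the paper's estimate $\Delta(x,n+1)=\tfrac12\Delta(x,n)\bigl(1-\sqrt{x}/H(x,n)\bigr)\leq\tfrac12\Delta(x,n)$ in a slightly different guise. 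The only difference is that you also verify $H(x,n)\geq 2^{-n}$ explicitly, which the paper takes from the remark in Definition~\ref{Def:Heron}.
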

\begin{proof}
Let $x\in [0,1]$ be given. We define $\Delta(x,n) := H(x,n)-\sqrt{x}$. We calculate
\begin{align*}
\Delta(x,n+1)&=\frac 12 \left( \text{H}(x,n)+\frac{x}{\text{H}(x,n)}\right) -\sqrt x = \frac{(H(n,x))^2 -2H(x,n)\sqrt x +x}{2H(x,n)}\\
&= \frac{(\Delta(x,n))^2}{2H(x,n)}.
\end{align*}
By induction on $n$ we immediately get $0\leq H(x,n)$ and therefore $0\leq \Delta(x,n+1)$. Since $\Delta(x,0)=1-\sqrt x\geq 0$ we have $\forall_{n\in\textbf{T}} \sqrt{x}\leq H(x,n)$.\\
Furthermore, we calculate:
\begin{align*}
\Delta(x,n+1)&= \frac{(\Delta(x,n))^2}{2H(x,n)}=\frac 12 \Delta(x,n)\frac{\Delta(x,n)}{H(x,n)}=\frac 12 \Delta(x,n)\left(1-\frac{\sqrt x}{H(x,n)}\right)\\
&\leq \frac{1}{2} \Delta(x,n)
\end{align*}
Therefore, by induction we have $|H(x,n)-\sqrt x|=\Delta(x,n)\leq 2^{-n}$ and this implies
\[
\forall_{p\in\textbf{T}}\forall_{n\geq p}|H(x,n)-\sqrt x|\leq 2^{-p},
\]
i.e.~$H(x)$ converges to $\sqrt x$ with modulus $\iota$.
\end{proof}
This lemma by itself does not have any computational content, but it states that $\iota$ is a modulus of convergence of $H x$ to $\sqrt{x}$. In some special cases we can improve on the modulus.
\begin{defi}[Poslog]
	For a positive integer $p$ we define $\texttt{poslog}(p)$ as the least natural number $n$ with $p\leq 2^n$. Equivalently it is the number of digits in the binary representation.
\end{defi}
One possibility to implement the function \texttt{poslog} is to define an auxiliary function $\texttt{auxlog}:\mathbb{Z}^+\rightarrow\mathbb{N}\rightarrow \mathbb{N}$ with the computation rules
\begin{align*}
	\texttt{auxlog}\ p\ n := 
	\begin{cases}
		n,& \text{if } p\leq 2^n \\
		\texttt{auxlog} \ p \ (n+1),& \text{otherwise,}   
	\end{cases}
\end{align*}
and then set $\texttt{poslog}(p) := \texttt{auxlog}\ p\ 0$.

\begin{prop}\label{Lem:PosLog}
If $x\in[\frac{1}{4},1]$ then $\mathtt{poslog}:\mathbb{Z}^+\rightarrow \mathbb{N}$ is a modulus of convergence of $H(x)$ to $\sqrt{x}$.
\end{prop}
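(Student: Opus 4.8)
The plan is to improve the convergence estimate from Lemma \ref{Lem:HeronConv} under the extra hypothesis $x \in [\frac14, 1]$, exploiting the fact that the initial value $1$ is already reasonably close to $\sqrt x$ when $x$ is bounded away from $0$. Recall from the proof of Lemma \ref{Lem:HeronConv} the key recurrence
\[
\Delta(x,n+1) = \tfrac12\,\Delta(x,n)\left(1 - \frac{\sqrt x}{H(x,n)}\right),
\]
together with $0 \le \sqrt x \le H(x,n)$, so the contraction factor $1 - \sqrt x / H(x,n)$ lies in $[0,1)$. The point is that for $x \ge \frac14$ we can bound this factor strictly below $1$ in a useful way, and even better, we can get a good bound on $\Delta(x,0)$ itself.

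First I would observe that $\Delta(x,0) = 1 - \sqrt x \le 1 - \tfrac12 = \tfrac12$ when $x \ge \frac14$, so we already start with $\Delta(x,0) \le 2^{-1}$. Combined with the bound $\Delta(x,n+1) \le \frac12 \Delta(x,n)$ from Lemma \ref{Lem:HeronConv}, induction gives $\Delta(x,n) \le 2^{-(n+1)}$ for all $n \in \total$. This already halves the error relative to the generic bound, but to get $\mathtt{poslog}$ as modulus we need the quadratic convergence: from $\Delta(x,n+1) = (\Delta(x,n))^2 / (2H(x,n))$ and $H(x,n) \ge \sqrt x \ge \frac12$, we get $\Delta(x,n+1) \le (\Delta(x,n))^2$. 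So starting from $\Delta(x,0) \le \frac12 = 2^{-1}$, a straightforward induction yields $\Delta(x,n) \le 2^{-2^n}$ for all $n \in \total$.

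Next I would translate this into the modulus statement. We must show $\forall_{p \in \total} \forall_{n \ge \mathtt{poslog}(p)} |H(x,n) - \sqrt x| \le 2^{-p}$. Since $\Delta$ is non-increasing in $n$ (the contraction factor is in $[0,1]$), it suffices to check $\Delta(x, \mathtt{poslog}(p)) \le 2^{-p}$. By definition $\mathtt{poslog}(p)$ is the least $n$ with $p \le 2^n$, so $\Delta(x, \mathtt{poslog}(p)) \le 2^{-2^{\mathtt{poslog}(p)}} \le 2^{-p}$, which is exactly what we need. The only real care needed is the monotonicity-in-$n$ step, which follows because each step multiplies $\Delta$ by a factor in $[0,1]$; this must be stated for total arguments, consistent with the conventions in the excerpt.

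I expect the main obstacle to be entirely bookkeeping rather than mathematical depth: namely, making sure the two nested inductions (one establishing $\Delta(x,n) \le 2^{-2^n}$, one handling the monotonicity $\Delta(x, n+1) \le \Delta(x,n)$ so we can pass from $n = \mathtt{poslog}(p)$ to all larger $n$) are carried out over total objects, and handling the base case $x = \frac14$ versus $x < \frac14$ correctly — but the latter is excluded by hypothesis, so $1 - \sqrt x \le \frac12$ is clean. One should also double-check that $H(x,n) \ge \frac12$ for $x \in [\frac14,1]$; this follows from $H(x,n) \ge \sqrt x \ge \frac12$, already available from Lemma \ref{Lem:HeronConv}. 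Since the proposition as stated carries no computational content (it only asserts that a particular function is a valid modulus), no program-extraction subtleties arise beyond ensuring the proof uses only non-computational axioms.
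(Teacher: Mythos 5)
Your proof is correct and takes essentially the same approach as the paper: use $\sqrt x\geq\frac12$ (hence $H(x,n)\geq\frac12$ and $\Delta(x,0)\leq\frac12$) to get $\Delta(x,n+1)\leq(\Delta(x,n))^2$, conclude $\Delta(x,n)\leq 2^{-2^n}$ by induction, and read off that $\mathtt{poslog}$ is a modulus. The only (harmless) difference is your final monotonicity detour: the paper applies the uniform bound directly, since $n\geq\mathtt{poslog}(p)$ gives $p\leq 2^n$ and thus $\Delta(x,n)\leq 2^{-2^n}\leq 2^{-p}$ for every such $n$, so no monotonicity of $\Delta$ in $n$ is needed.
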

\begin{proof}
Let $x\in [\frac 14,1]$. From Lemma \ref{Lem:HeronConv} we know $\forall_{n\in\total}\sqrt{x}\leq H(x,n)$ and therefore $\forall_{n\in\total}\frac 12\leq H(x,n)$. In the proof of Lemma \ref{Lem:HeronConv} the formula
\[
\Delta(x,n+1)= \frac{(\Delta(x,n))^2}{2H(x,n)}
\]
is proven. This implies $\Delta(x,n+1)\leq (\Delta(x,n))^2$. Since $\frac 12 \leq\sqrt{x}$, by induction we get
\[
\Delta(x,n)\leq 2^{-2^n}
\]
for all $n\in\total$. Hence for given $p$ and $n\geq \texttt{poslog}(p)$ we get $p\leq 2^n$ and 
\[
\left|H(x,n)-\sqrt x\right|=\Delta(x,n)\leq 2^{-2^n}\leq2^{-p}.\qedhere
\]
\end{proof}
\begin{lem}\label{Lem:HeronCoI}
For all $x \in \coi$ with $\frac 1 {16} \leq x$ we have $\forall_{n\in\textbf{T}}H(x,n)\in \coi$. Expressed as a formula
\begin{align*}
\forall_{x\in\coi}\left(\frac 1{16}\leq x \rightarrow \forall_{n\in\total}H(x,n)\in\coi\right).
\end{align*}
\end{lem}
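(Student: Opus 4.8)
The plan is to prove the statement by induction on $n\in\total$. Since the elimination rule of the totality predicate on $\mathbb{N}$ is ordinary induction (see Section~\ref{Formalisation}), the computational content will be a \texttt{natRec} that produces the signed digit stream of $H(x,n)$ from that of $H(x,n-1)$, starting from the stream of $1$.

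For the base case $n=0$ we have $H(x,0)=1$, and $1\in\coi$ is the content of the theorem \texttt{CoIOne} (realised by the constant stream $\mathrm{C}\,1\,(\mathrm{C}\,1\,\dots)$); by \texttt{CoICompat}, whose computational content is the identity, this transfers along the equation $H(x,0)=1$. For the inductive step assume $H(x,n)\in\coi$, so that $|H(x,n)|\le 1$ by $\coi^-$. First I would collect the needed real-number bounds, none of which carry computational content. From Lemma~\ref{Lem:HeronConv} we have $\sqrt{x}\le H(x,n)$; together with the hypothesis $\tfrac1{16}\le x$ this gives $\tfrac14=\sqrt{\tfrac1{16}}\le\sqrt{x}\le H(x,n)$, so $H(x,n)$ is bounded below by $\tfrac14$, and together with $x\le 1$ (from $x\in\coi$) and the elementary inequality $x\le\sqrt{x}$ on $[0,1]$ it gives $x\le H(x,n)$, i.e.~$\bigl|\tfrac{x}{H(x,n)}\bigr|\le 1$.

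Next I would invoke the signed digit division algorithm of \cite{schwichtenberg2020sddiv} (the file \texttt{examples/analysis/sddiv.scm}) with dividend $x\in\coi$ and divisor $H(x,n)\in\coi$: its side conditions are that the divisor be provably bounded below by a fixed positive constant and that the dividend be dominated in absolute value by the divisor, both of which have just been established with the constant $\tfrac14$ --- this is the reason the lemma is stated with $\tfrac1{16}\le x$ rather than with $0<x$. This yields $\tfrac{x}{H(x,n)}\in\coi$. Finally, $H(x,n+1)=\tfrac12\bigl(H(x,n)+\tfrac{x}{H(x,n)}\bigr)$, and since both summands lie in $\coi$, the general average theorem \texttt{CoIAverage} (cf.~\texttt{examples/analysis/average.scm}) together with \texttt{CoICompat} gives $H(x,n+1)\in\coi$, which closes the induction. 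On the computational side the extracted term thus iterates the map that sends a stream $v$ to the average of $v$ and the quotient of the fixed stream of $x$ by $v$.

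The step I expect to be the main obstacle is the division. The remainder is routine --- a base case, a single application of \texttt{CoIAverage}, and a handful of inequalities on reals with no computational weight --- but the division lemma must be applied with exactly the right hypotheses: one has to make sure that the positive constant against which its algorithm needs to see the divisor bounded is matched by $H(x,n)\ge\sqrt{x}\ge\tfrac14$, and that its domination condition $|x|\le H(x,n)$ holds at the point of use, which we obtained above from $x\le\sqrt{x}\le H(x,n)$. If the division algorithm additionally requires a lower bound on the dividend for termination of its look-ahead, that too is supplied by $x\ge\tfrac1{16}$.
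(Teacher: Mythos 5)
Your proposal is correct and follows essentially the same route as the paper: induction on $n\in\total$, base case $H(x,0)=1\in\coi$, and in the step the bounds $\frac14=\sqrt{\tfrac1{16}}\le\sqrt{x}\le H(x,n)$ and $x\le\sqrt{x}\le H(x,n)$ from Lemma~\ref{Lem:HeronConv}, feeding exactly the hypotheses $|x|\le y$ and $\frac14\le y$ of \texttt{CoIDiv}, followed by \texttt{CoIAverage}. The extracted term you describe (a \texttt{natRec} iterating average-with-quotient starting from the stream for $1$) is precisely \texttt{cCoIHeron} as given in the paper.
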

\begin{proof}
We use the results  of \cite{MiyamotoSchwichtenberg15}, \cite{schwichtenberg2020sddiv} and of Section 3.3 from \cite{Masterarbeit}. Namely we have
\begin{align}\label{Form:CoIAv}
\forall_{x\in\coi,y\in\coi} \frac {x+y}2\in\coi.
\end{align}
In Minlog this theorem is implemented in \texttt{average.scm} in the folder \texttt{examples/analysis} and has the name \texttt{CoIAverage}. 
Furthermore
\begin{align}\label{Form:CoIDiv}
\forall_{x\in\coi,y\in\coi}\left(|x| \leq y \rightarrow \frac 1 4 \leq y \rightarrow \frac xy\in\coi\right)
\end{align}
is proven there. In Minlog this theorem is implemented in \texttt{sddiv.scm} in the folder \texttt{examples/analysis} and has the name \texttt{CoIDiv}.
Using these, the proof of this lemma is done by induction on $n$:
For $n=0$ it is easy since $H(x,0)=1$ and $1\in\coi$.
For any total $n$ we have 
\[
H(x,n+1)=\frac 12 \left( H(x,n)+\frac{x}{H(x,n)} \right).
\]
By Lemma \ref{Lem:HeronConv} we get $\sqrt{x}\leq H(n,x)$ and therefore 
\[
\sqrt{\frac 1{16} } =\frac 14 \leq H(x,n)\quad\text{and}\quad x\leq \sqrt{x} \leq H(x,n).
\]
Additionally, by the induction hypothesis, $H(x,n)\in\coi$. By (\ref{Form:CoIDiv}) we have $\frac{x}{H(x,n)}\in\coi$, so with (\ref{Form:CoIAv}) we get $H(x,n+1)\in\coi$.
\end{proof}
By \texttt{cCoIAverage} and \texttt{cCoIDiv} we denote the computational content of (\ref{Form:CoIAv}) and (\ref{Form:CoIDiv}). Each of these terms takes two streams of reals and returns a stream of their average and their quotient, respectively. Then the extracted term is
\begin{verbatim}
  cCoIHeron :: (Str -> (Nat -> Str))
  cCoIHeron u0 n1 = natRec n1 
                      cCoIOne 
                      (\ n2 -> (\ u3 -> (cCoIAverage u3 (cCoIDiv u0 u3))))
\end{verbatim}
Informally the computational content \texttt{Heron} is defined by recursion:
\begin{align*}
&\texttt{Heron}\ v\ 0 := [1,1,\dots]\\
&\texttt{Heron}\ v\ (n+1):= \texttt{cCoIAverage} (\texttt{Heron}\ v\ n) (\texttt{cCoIDiv}\ v\ (\texttt{Heron}\ v\ n))
\end{align*}
Which is Definition \ref{Def:Heron} in the notation of streams.
\begin{thm}\label{Thm:Sqrt}
\begin{align*}
\forall_{x\in\coi}\left(0\leq x \to \sqrt x\in\coi\right)
\end{align*}
\end{thm}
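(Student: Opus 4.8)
The plan is to combine Heron's method with the convergence theorem \ref{Thm:CoILim}, first isolating the region where the Heron sequence provably stays inside $\coi$. So I would begin by proving the auxiliary statement $\forall_{x\in\coi}\left(\tfrac1{16}\leq x\to\sqrt x\in\coi\right)$. Given such an $x$ we have $\tfrac1{16}\leq x\leq 1$, the upper bound coming from $\coi^-$; hence by Lemma \ref{Lem:HeronConv} the Heron sequence $H(x)$ converges to $\sqrt x$ with modulus $\iota$, and by Lemma \ref{Lem:HeronCoI} we have $\forall_{n\in\total}H(x,n)\in\coi$. Applying the convergence theorem \ref{Thm:CoILim} to the sequence $H(x)$, the limit $\sqrt x$ and the modulus $\iota$ (after noting $\iota\in\mathbf{Mon}$) yields $\sqrt x\in\coi$.

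For the theorem itself I would use coinduction, i.e.~the principle \eqref{eq:coind}, with the competitor predicate
\[
Pz:=\exists_{x\in\coi}\left(0\leq x\wedge z=\sqrt x\right).
\]
Since $P\subseteq\coi$ then gives the claim, it suffices to establish the second premise: for every $z$ with $Pz$, say $z=\sqrt x$ with $x\in\coi$, $0\leq x$, produce $d\in\sd$ and $z'\in\coi\cup P$ with $|z'|\leq 1$ and $z=\frac{d+z'}2$. To choose the digit, translate $x$ to a Cauchy real by Theorem \ref{thm:StrToCs} and apply Lemma \ref{thm:ApproxSplit} to $\tfrac1{16}<\tfrac14$, obtaining $\tfrac1{16}\leq x$ or $x\leq\tfrac14$.

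In the case $\tfrac1{16}\leq x$ the auxiliary statement gives $z=\sqrt x\in\coi$, so one application of $\coi^-$ to $z$ produces $d$ and $z'\in\coi\subseteq\coi\cup P$ with $|z'|\leq 1$ and $z=\frac{d+z'}2$; here the corecursion terminates. In the case $x\leq\tfrac14$ set $d:=0$ and $z':=2z$. From $0\leq x\leq\tfrac14$ we get $|x|\leq\tfrac12$ and $|2x|\leq\tfrac12$, so two applications of Lemma \ref{Lem:CoIDouble} give $4x\in\coi$; moreover $0\leq 4x\leq 1$, and since $2\sqrt x=\sqrt{4x}$ we have $z'=\sqrt{4x}$ with $0\leq z'\leq 1$, hence $Pz'$ and in particular $z'\in\coi\cup P$, while $z=\frac{0+z'}2$. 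Here the corecursion continues with $4x$ in place of $x$, so this is a genuine use of \eqref{eq:coind}.

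The main obstacle is precisely the behaviour near $0$: the Heron sequence leaves $\coi$ for $x$ too small, so Lemma \ref{Lem:HeronCoI} does not apply there, and Lemma \ref{thm:ApproxSplit} only resolves the comparison approximately. The second (digit-$0$) branch handles this by emitting one digit and replacing $x$ by $4x$, which stays in $\coi$ by Lemma \ref{Lem:CoIDouble}; for $x>0$ some $4^kx$ eventually exceeds $\tfrac1{16}$ and the Heron branch takes over, whereas for $x=0$ this branch runs forever and yields the stream $[0,0,\dots]$ representing $\sqrt0=0$. Since each pass through the second premise emits exactly one digit, the corecursion is productive; the remaining work is only the routine real-number bookkeeping ($2\sqrt x=\sqrt{4x}$, the interval bounds, and $\iota\in\mathbf{Mon}$).
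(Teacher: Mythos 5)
Your proof is correct, and its skeleton is the same as the paper's: coinduction with the competitor $Pz:=\exists_{x\in\coi}(0\leq x\wedge z=\sqrt x)$, Heron plus the convergence theorem (Lemmas \ref{Lem:HeronConv}, \ref{Lem:HeronCoI}, Theorem \ref{Thm:CoILim}) when the radicand is bounded away from $0$, and ``emit the digit $0$ and rescale the radicand by $4$'' otherwise, which also covers $x=0$ productively. The genuine difference is how the case distinction and the rescaling are realised. The paper applies $\coi^-$ three times to the radicand and does a case analysis on the first three signed digits: patterns forcing $y\leq 0$ give $x=0$ and the constant zero stream at once; patterns of the shape $00e$ give the rescaled radicand $\frac{e+y'}2=4y$ directly as the tail of the stream (via Lemma \ref{Lem:CoIClosure}); and the remaining patterns force $\frac18\leq y$, where Heron and \texttt{SdLim} are used. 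You instead translate the radicand to a Cauchy real by Theorem \ref{thm:StrToCs} and decide $\frac1{16}\leq x$ versus $x\leq\frac14$ by Lemma \ref{thm:ApproxSplit}, and in the small case you obtain $4x\in\coi$ by two applications of Lemma \ref{Lem:CoIDouble}. Both are sound: your version is shorter on paper (two cases instead of seventeen digit patterns, and $2\sqrt x=\sqrt{4x}$ replaces the stream rewriting), but its extracted program makes a detour through the Cauchy representation at every digit decision and never short-circuits to the constant zero stream, whereas the paper's case analysis stays entirely on signed digit streams, has a fixed lookahead of three digits of the radicand per decision, and terminates the corecursion immediately when it detects $y\leq 0$. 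One small point to make explicit in your write-up: Lemma \ref{thm:ApproxSplit} needs the Cauchy data of the \emph{current} radicand at each corecursive step, so you should note that the witness $4x$ carried in $P$ is in $\coi$ (which you do prove) and gets re-translated by Theorem \ref{thm:StrToCs} each round; this is fine, but it is where the extra cost relative to the paper's proof enters.
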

\begin{proof}
We apply \eqref{eq:coind} with
\[
Px := \exists_{y\in\coi}\left(0\leq y\wedge \sqrt y=x\right).
\]
To show the goal formula, we need to show the second premise, namely for all $x$
\[
\exists_{y\in\coi}\left(0\leq y\wedge \sqrt y=x\right) \rightarrow \exists_{d\in\sd,x'}\left(x'\in\left({^{co}\textbf{I}}\cup P\right)\wedge|x'|\leq 1\wedge x=\dfrac{d+x'}{2}\right).
\]
Let $y\in\coi$ with $0\leq x$ and $\sqrt y =x$ be given. Triple application of ${^{co}\textbf{I}}^-$ to $y\in\coi$ yields $d_1,d_2,d_3\in \textbf{Sd}$ and $y'\in {^{co}\mathbf{I}}$ with $y=d_1d_2d_3y'$. We distinguish three different cases:

If $y$ has one of the forms $\overline{1}d_2d_3y'$, $0\overline{1}d_3y'$ or $00\overline{1}y'$ we have $y\leq 0$ and therefore $x=\sqrt y=0$. Hence we define $d:=0$ and $x':=0$ and the claim follows immediately.

If $y$ has one of the forms $000y'$, $001y'$, $01\overline{1}y'$ or $1\overline{11}y'$ we can rewrite $y=00ey'$ for some $e\in \{0,1\}$. Here we define $d:=0$ and $x':= \sqrt{\frac{e+y'}2}$. Then \[
x=\sqrt{y}=\sqrt{\frac{e+y'}8}=\frac{\sqrt{\frac{e+y}2}}2=\frac{d+x'}2.
\]
Furthermore $\frac{e+y'}2\in\coi$ by Lemma \ref{Lem:CoIClosure} and  $0\leq\frac{e+y'}2$ since $0\leq y= \frac{e+y'}{8}$. Altogether we get $Px'$.

The remaining case is that $y$ has one of the forms $010y'$, $011y'$, $1\overline{1}1y'$, $1\overline{1}0y'$, $10d_3y'$ or $11d_3y'$. In that case we have $\frac 18 \leq y$. Hence by Lemma  \ref{Lem:HeronCoI} $\forall_{n\in\textbf{T}} {^{co}\textbf{I}}(H(y,n))$  and we know that $H(y)$ converges to $\sqrt{y}$ with modulus $\iota:\mathbb{Z}^+\rightarrow\mathbb{N}$ by Lemma \ref{Lem:HeronConv} . Thus we use Theorem \ref{Thm:CoILim} to get $x\in {^{co}\textbf{I}}$ and by one application of $\coi^-$
\[
\exists_{d\in\sd,x'\in\coi}\left(|x'|\leq 1\wedge x=\dfrac{d+x'}{2}\right),
\] which proves the goal formula.
\end{proof}
We omit the description of the extracted term as Haskell code as it is quite long due to the case distinctions. But we give an informal description of the extracted term:

By the definitions of \texttt{cSdLim} as the computational content of Theorem \ref{Thm:CoILim} and \texttt{Heron} as the computational content of Lemma \ref{Lem:HeronCoI} we have the following rules for the computational content $\texttt{sqrt}:\texttt{Str}\rightarrow \texttt{Str}$ of this theorem:  
\begin{align*}
\texttt{sqrt}(\overline{1}u) &:= [0,0,\dots]\\
\texttt{sqrt}(0\overline{1}u) &:= [0,0,\dots]\\
\texttt{sqrt}(00u) &:= 0\ \texttt{sqrt}\ u\\
\texttt{sqrt}(01\overline{1}u) &:= 0\ \texttt{sqrt}\ 1u\\
\texttt{sqrt}(1\overline{11}u) &:= 0\ \texttt{sqrt}\ 1u\\
\texttt{sqrt}\ u &:= \texttt{cSdLim}\ \iota \ (\texttt{Heron}\ u)
\end{align*}
The last rule shall only be applied if the other rules do not fit.
\subsection{Multiplication}
Our last application is motivated by Helmut Schwichtenberg and the Minlog file \texttt{sdmult.scm} in \texttt{examples/analysis}. There it is proven that for any $x,y\in\coi$ the product $xy$ is also in $\coi$.
In the following we use the limit-theorem to formulate another proof of this theorem. Our approach is based on repeated applications of $\coi^-$ to $y\in\coi$, namely
\[
xy=\frac{xd_1+xy_1}{2}=\frac{x(d_1+\frac{d_1}2)+x\frac{y_2}{2}}{2}=\dots=x\sum_{i=1}^{n}\frac{d_i}{2^i}+x\frac{y_n}{2^n},
\]
and the sequence $xs\ n:=\sum_{i=1}^{n}\frac{d_i}{2^i}$ converges to $y$.
In order to realize this idea we first define a constant $\mathtt{Sum}:\mathbb{L}(\mathbf{Sd})\to\mathbb{Q}$ by
\[
\mathtt{Sum}\,[]=0\qquad\mathtt{Sum}\,l :=\sum_{i=1}^{\mathtt{lth}(l)}\frac{(l)_i}{2^i},
\]
where $\mathtt{lth}:\mathbb{L}\to\mathbb{N}$ is the length-function for lists and $(l)_i$ is the $i$-th element of the list $l$. 
We prove the following.
\begin{lem}[\texttt{CoIMultSum}]
	\label{lem:multdecomp}Let $x\in\coi$ then
	for all $l:\mathbb{L}(\mathbf{Sd})$ in $\mathbf{T}$ we have
	\[
	x\cdot\mathtt{Sum}\,l\in\coi.		
	\]
\end{lem}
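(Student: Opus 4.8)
The plan is to proceed by induction on the list $l \in \mathbf{T}$, which is the natural elimination principle for the list data type and which matches the recursive structure of $\mathtt{Sum}$.

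\textbf{Base case.} For $l = []$ we have $\mathtt{Sum}\,[] = 0$, so the goal is $x \cdot 0 \in \coi$, i.e.\ $0 \in \coi$. This is immediate: $0 = \frac{0+0}{2}$ with $0 \in \mathbf{Sd}$, so $\coi^+$ (or a trivial coinduction, as was already used implicitly for $\texttt{cCoIOne}$) gives $0 \in \coi$. Equivalently one can invoke Lemma~\ref{Lem:CoICompat} together with the stream of constant $0$'s.

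\textbf{Step case.} Assume $l \in \mathbf{T}$ with $x \cdot \mathtt{Sum}\,l \in \coi$ as induction hypothesis, and let $d \in \mathbf{Sd}$; we must show $x \cdot \mathtt{Sum}\,(d :: l) \in \coi$. The key identity is
\[
\mathtt{Sum}\,(d :: l) = \frac{d + \mathtt{Sum}\,l}{2},
\]
which follows directly from the definition of $\mathtt{Sum}$ by shifting indices. Hence
\[
x \cdot \mathtt{Sum}\,(d :: l) = \frac{d\,x + x\cdot\mathtt{Sum}\,l}{2} = \frac{1}{2}\!\left(\frac{d x + d x}{2} + \ldots\right)
\]
— more cleanly: $x\cdot\mathtt{Sum}(d::l) = \frac{x \cdot \mathtt{Sum}\,l}{2} + \frac{d}{2}x$. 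The term $\frac{x\cdot\mathtt{Sum}\,l}{2}$ is not directly in range of Lemma~\ref{Lem:CoIClosure}, so instead I would rewrite using the average theorem. Since $x \in \coi$ we have $|x| \le 1$, and by the induction hypothesis $x\cdot\mathtt{Sum}\,l \in \coi$; then one shows $x\cdot\mathtt{Sum}(d::l) = \frac{(x\cdot\mathtt{Sum}\,l) + dx}{2}$, where $dx$ abbreviates $\frac{d+x}{2}$ only if we are careful — so the honest computation is: writing $d x$ for the real $\frac{d x + 0}{\cdot}$ is ambiguous, hence I would simply observe
\[
x\cdot\mathtt{Sum}(d::l) = \frac{1}{2}\bigl(d\cdot x\bigr) + \frac{1}{2}\bigl(x\cdot\mathtt{Sum}\,l\bigr),
\]
and note $d \cdot x \in \coi$ whenever $d \in \{\overline{1},0,1\}$: for $d = 0$ it is $0 \in \coi$, for $d = 1$ it is $x \in \coi$, and for $d = \overline{1}$ it is $-x$, which lies in $\coi$ because $\coi$ is closed under negation (negate every digit of the stream; this is a standard fact, or a one-line coinduction with competitor $Px := \exists_{y\in\coi}(x = -y)$). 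Then $x\cdot\mathtt{Sum}(d::l)$ is the average of two elements of $\coi$, so it lies in $\coi$ by the average theorem~\eqref{Form:CoIAv} (\texttt{CoIAverage}). Finally, a case distinction on $d$ can be used to replace the general average by the cheaper operations: for $d = 0$ the value is $\frac{x\cdot\mathtt{Sum}\,l}{2} = \frac{0 + x\cdot\mathtt{Sum}\,l}{2}$, so Lemma~\ref{Lem:CoIClosure} applies directly with digit $0$; for $d = \pm 1$ one gets $\frac{\pm x + x\cdot\mathtt{Sum}\,l}{2}$, again an average of two $\coi$-reals.

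\textbf{Main obstacle.} The routine calculations are trivial; the only real design choice is which closure lemma to route the step case through. Using the full average theorem \texttt{CoIAverage} is conceptually cleanest and keeps the proof short, but it pulls in a heavy piece of machinery and makes the extracted term call \texttt{cCoIAverage} at every digit of the output list; routing through the negation lemma plus Lemma~\ref{Lem:CoIClosure} via a case split on $d$ gives a leaner extracted term but requires having negation-closure of $\coi$ available (which, if not already in the development, is a short separate coinduction). I expect the paper to take the average-theorem route since \texttt{CoIAverage} is already cited in~\eqref{Form:CoIAv}, and the subtle point to get right is simply the bookkeeping identity $\mathtt{Sum}(d::l) = \tfrac{d+\mathtt{Sum}\,l}{2}$ together with ensuring $|x\cdot\mathtt{Sum}\,l| \le 1$ and $|dx|\le 1$ so that the hypotheses of the average/closure lemmas are met — both of which follow from $|x|\le 1$ and $|\mathtt{Sum}\,l|\le 1$ (the latter being a geometric-series bound on the definition of $\mathtt{Sum}$).
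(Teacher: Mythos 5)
Your proof is correct and follows essentially the same route as the paper: induction on $l$, with base case $0\in\coi$ (\texttt{CoIZero}) and step case via the identity $x\cdot\mathtt{Sum}(d::l)=\tfrac12\bigl(x\cdot\mathtt{Sum}\,l + d\cdot x\bigr)$ discharged by \texttt{CoIAverage}. The only cosmetic difference is that the paper cites the lemma \texttt{CoISdTimes} for $d\cdot x\in\coi$ instead of doing your explicit case distinction on $d$ (which is exactly what that lemma encapsulates).
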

\begin{proof} We use the theorem \texttt{CoIAverage}, which was already mention in the proof of Lemma \ref{Lem:HeronCoI} and \texttt{CoISdTimes} (i.e.~$\forall_{x\in{\coi},d\in\sd}dx\in \coi$).
	
	The proof is done by induction on $l\in \total$. Namely if $l=[]$ then $x\cdot0=0\in\coi$ (see \texttt{CoIZero} in Minlog). Now assume that
	$
	x\cdot\mathtt{Sum}\,l\in\coi$ and
	$d\in\mathbf{Sd}$. We calculate
	\[
	x\cdot
	\mathtt{Sum}(d::l)=
	x\sum_{i=1}^{n+1}\frac{(l)_i}{2^i}=\frac 12\left(x\sum_{i=1}^{n}\frac{(l)_{i+1}}{2^i}+x\cdot (l)_1\right).
	\]
	Now we can
	apply \texttt{CoIAverage}, namely 
	$x\cdot l_1\in\coi$ by \texttt{CoISdTimes} and the first summand is in $\coi$ by the induction hypothesis. 
\end{proof}
Note that the induction hypothesis is applied to the list $(l)_2::\dots :: (l)_{n+1}$, so the stream computed in the previous step is not used. Therefore, the runtime of the algorithm must be at least quadratic in the length of the list in the output. We will later see that the runtime of the algorithm that is obtained is worse than the one extracted in \cite{schwichtenberg2021sdmult}. The Haskell-translation of the extracted term is
\begin{verbatim}
  cCoIMultSum :: Str -> [Sd] -> Str
  cCoIMultSum u0 l1 = listRec 
    l1 
    cCoIZero 
    (\ s2 -> (\ l3 -> (\ u4 -> (cCoIAverage (cCoISdTimes s2 u0) u4))))
\end{verbatim}
And the computational content of the lemma, here denoted $\mathtt{f}:\texttt{Str}\to\mathbb{L}(\sd)\to\texttt{Str}$, can also be represented in the more readable form by
\begin{align*}
	&\mathtt{f}(u,[]) = \mathtt{cCoIZero}\\
	&\mathtt{f}(u,d::l) = \mathtt{cCoIAverage}(
	\mathtt{f}(u,l),
	\mathtt{cCoISdTimes}(d,u)),
\end{align*}
where \texttt{cCoIZero} is analogous to \texttt{cCoIOne}:
\begin{verbatim}
  cCoIZero :: Str
  cCoIZero = (aiCoRec () (\ g -> (SdM , (Right ()))))	
\end{verbatim}
The next lemma is  
basically repeated application of $\coi^-$. The proof is very similar to the proof of Theorem \ref{thm:StrToCs} and so are the extracted terms.
\begin{lem}[\texttt{CoIToConvSeq}]
	\label{lem:multcoi}
	Let $x\in\coi$ then there exists $G:\mathbb{N}\to\mathbb{L}(\mathbf{Sd})$ in $\mathbf{T}$ such that
	\[
	\mathbf{Conv}(\lambda_n\mathtt{Sum}\,G(n),x,\iota).
	\]
\end{lem}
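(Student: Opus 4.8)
The plan is to reproduce the proof of Theorem~\ref{thm:StrToCs} almost verbatim, replacing the running rational approximation by the list of signed digits of $x$ read so far. Concretely, I would apply $\DC$ to the predicate
\[
P(n,l):=l\in\total\ \wedge\ \mathtt{lth}(l)=n\ \wedge\ \exists_{y\in\coi}\left(x=2^{-n}y+\mathtt{Sum}\,l\right).
\]
The conjunct $\mathtt{lth}(l)=n$ is included so that extending $l$ by one digit behaves predictably: whenever $\mathtt{lth}(l)=n$ one has $\mathtt{Sum}(l\mathbin{+\!\!+}[d])=\mathtt{Sum}\,l+d\cdot 2^{-(n+1)}$, which is exactly what carries the invariant from $n$ to $n+1$. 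It also yields $G\in\total$ at the end, since the conjunct $l\in\total$ transfers to $G(n)\in\total$ for every $n$.

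For the first premise $\exists_l P(0,l)$ I would take $l:=[\,]$ and $y:=x$: then $\mathtt{lth}([\,])=0$, $\mathtt{Sum}([\,])=0$ and $2^{-0}x=x$, so $P(0,[\,])$ holds. For the second premise, assume $P(n,l)$ with $n\in\total$; in particular $l\in\total$ and there is $y\in\coi$ with $x=2^{-n}y+\mathtt{Sum}\,l$. Applying $\coi^-$ to $y$ gives $d\in\sd$ and $y'\in\coi$ with $|y'|\leq 1$ and $y=\frac{d+y'}{2}$. Put $l':=l\mathbin{+\!\!+}[d]$; then $l'\in\total$, $\mathtt{lth}(l')=n+1$, and
\[
x=2^{-n}\cdot\frac{d+y'}{2}+\mathtt{Sum}\,l=2^{-(n+1)}y'+\left(\mathtt{Sum}\,l+d\cdot 2^{-(n+1)}\right)=2^{-(n+1)}y'+\mathtt{Sum}\,l',
\]
so $P(n+1,l')$ holds with witness $y'$.

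By $\DC$ we obtain $G:\mathbb{N}\to\mathbb{L}(\mathbf{Sd})$ with $\forall_{n\in\total}P(n,G(n))$, hence in particular $G\in\total$. For each $n\in\total$ take the witness $y\in\coi$ from $P(n,G(n))$; since $y\in\coi$ we have $|y|\leq 1$ by $\coi^-$, so $|x-\mathtt{Sum}\,G(n)|=2^{-n}|y|\leq 2^{-n}$. Since $\iota\in\mathbf{Mon}$ and $\iota(p)=p$, for every $p\in\total$ and every $n\geq\iota(p)$ we get $|x-\mathtt{Sum}\,G(n)|\leq 2^{-n}\leq 2^{-p}$, i.e.~$\mathbf{Conv}(\lambda_n\mathtt{Sum}\,G(n),x,\iota)$, which is the claim.

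There is no real obstacle here: as the text already announces, the proof and the extracted term mirror those of Theorem~\ref{thm:StrToCs}. The only points deserving care are the elementary length bookkeeping that makes the inductive step go through (hence the $\mathtt{lth}(l)=n$ conjunct in $P$) and checking that totality passes through $\DC$ so that $G\in\total$. Computationally the realiser is the iteration --- via the recursion operator realising $\DC$ --- that repeatedly strips the head digit off the stream realising $x\in\coi$ and appends it to the list accumulated so far, i.e.~the list-valued analogue of \texttt{cStrToCs}.
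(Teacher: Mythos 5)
Your proof is correct and takes essentially the same route as the paper: iterate $\coi^-$ to accumulate the first $n$ digits into a list (with the length bookkeeping making the step from $n$ to $n+1$ work), and bound the remainder by $2^{-n}$ using $|y|\leq 1$ to get convergence with modulus $\iota$. The only cosmetic difference is that you invoke $\DC$ directly (mirroring Theorem~\ref{thm:StrToCs}), whereas the paper proves the corresponding $\forall_{n}\exists_{l}$ statement by induction on $n$ and then applies countable choice $\mathbf{CC}$; both yield essentially the same \texttt{natRec}-based extracted term.
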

\begin{proof}
	For $x\in\coi$ we first show that
	\[
		\exists_{G\in\mathbf{T}}\forall_{n\in\mathbf{T}}\exists_{y\in\coi}\left(\mathtt{lth}(G\,n)=n\wedge x=\mathtt{Sum}\,(G\,n)+\frac{y}{2^n}\right).
	\]
	By application of $\mathbf{CC}$ it suffices to show
	\[
		\forall_{n\in\mathbf{T}}\exists_{l\in\mathbf{T}}\exists_{y\in\coi}\left(\mathtt{lth}(l)=n\wedge x=\mathtt{Sum}\,l+\frac{y}{2^n}\right),
	\]
	which is done by induction on $n\in\total$.
	If $n=0$ then choose $y:=x$ and $l=[]$.\\
	Now assume we have $l'\in\total$ with $\mathtt{lth}(l')=n$ and $z\in\coi$ with
	\[
	x=\mathtt{Sum}\,l'+\frac{z}{2^n}.
	\]
	We apply $\coi^-$ to $z$ and get $d\in\sd,y\in\coi$ with $z=\frac{y+d}{2}$, then $l:= l'\star d$ (Note that $\star$ denotes concatenation of lists) will do the trick.\\
	So assume we have $G$ with the properties above and $n\in\mathbf{T}$. Then there exists some $y_n\in\coi$ with $x=\mathtt{Sum}\,(G\,n)+\frac{y}{2^n}$ and
	\[
	\left|x-\mathtt{Sum}\,(G\, n)\right|\leq \frac{|y_n|}{2^n}\leq 2^{-n}.
	\]
	Hence $\mathtt{Sum}\,(G\, \cdot)$ converges to $x$ with modulus $\mathtt{PosToNat}$.
\end{proof}
The extracted term is
\begin{verbatim}
  cCoIToConvSeq :: Str -> Nat -> [Sd]
  cCoIToConvSeq u0 n1 = fst 
    (natRec n1 ([] , u0) 
    (\ n2 -> (\ g -> (case g of
      { (,) l u1 -> ((l ++ ((head u1) : [])) , (tail u1)) }))))
\end{verbatim}
Let $\mathtt{g}: \texttt{Str}\to\mathbb{N}\to\mathbb{L}(\sd)$ denote a simplified iterative version of the computational content of the last lemma. It can be given by the computation rules
\[
\mathtt{g}(u, 0) = ([],u)\qquad
\mathtt{g}(\mathrm{C}\,s\,v,n+1) = g(v,n)\star s,
\]
so this is the function that returns the first $n$ elements of the stream.
The last lemma we need states that limits are closed under multiplication, namely:
\begin{lem}
	\label{lem:limclosedmult}
	Assume $\mathbf{Conv}(ys,y,M)$ and $|x|\leq 1$ then $\mathbf{Conv}(\lambda_n (x\cdot(ys\,n)),xy,M)$.
\end{lem}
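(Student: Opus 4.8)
The plan is to unfold both occurrences of $\mathbf{Conv}$ and reduce the claim to a single elementary estimate. Since this lemma carries no computational content — it only concerns the propositional convergence predicate — it suffices to argue in the theory, without tracking any realiser; in particular the modulus is returned unchanged.

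First I would fix an arbitrary $p\in\total$ and $n\geq M(p)$ and use compatibility of $=$ and $\leq$ with multiplication to rewrite
\[
|xy - x\cdot(ys\ n)| = |x|\cdot|y-(ys\ n)|.
\]
From the hypothesis $|x|\leq 1$ we get $|x|\cdot|y-(ys\ n)|\leq |y-(ys\ n)|$, and applying $\mathbf{Conv}(ys,y,M)$ at the same $p$ and $n$ gives $|y-(ys\ n)|\leq 2^{-p}$. Chaining these yields $|xy - x\cdot(ys\ n)|\leq 2^{-p}$. As $p\in\total$ and $n\geq M(p)$ were arbitrary, this is precisely $\mathbf{Conv}(\lambda_n (x\cdot(ys\ n)),xy,M)$.

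There is no genuine obstacle here; the only things to be careful about are that the product $x\cdot(ys\ n)$ is read as the real-number product and that $\leq$ and $=$ on $\mathbb{R}$ are compatible with multiplication and with taking absolute values — all of which is covered by the abstract axioms assumed for $\mathbb{R}$. No property of $M$ beyond what $\mathbf{Conv}$ already packages (not even monotonicity) is used, which is why the same modulus $M$ works for the scaled sequence. Together with Lemmas~\ref{lem:multdecomp} and~\ref{lem:multcoi} and Theorem~\ref{Thm:CoILim}, this lemma supplies the last ingredient needed to conclude $xy\in\coi$ by exhibiting $xy$ as the limit of the sequence $\lambda_n\, x\cdot\mathtt{Sum}\,(G\,n)$.
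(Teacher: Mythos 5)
Your proof is correct and is exactly the elementary argument the paper has in mind (the paper omits it, noting only that ``the proof is elementary''): the estimate $|xy-x\cdot(ys\ n)|=|x|\,|y-(ys\ n)|\leq 2^{-p}$ for $n\geq M(p)$, using $|x|\leq 1$, with the modulus $M$ unchanged. Your remarks that no monotonicity of $M$ is needed and that the lemma carries no computational content are also accurate.
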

The proof is elementary.
Now we can apply the limit theorem.
\begin{thm} For all $x,y\in\coi$ we have $x\cdot y\in\coi$.
\end{thm}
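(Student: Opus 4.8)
The plan is to assemble the statement directly from the three preceding lemmas together with the convergence theorem. First I would fix $x,y\in\coi$ and record, by one application of $\coi^-$ to $x\in\coi$, that $|x|\le 1$. Applying Lemma~\ref{lem:multcoi} to $y$ then yields a total $G:\mathbb{N}\to\mathbb{L}(\mathbf{Sd})$ with $\mathbf{Conv}(\lambda_n\mathtt{Sum}\,G(n),y,\iota)$.

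Next I would combine these. By Lemma~\ref{lem:limclosedmult}, since $|x|\le 1$, the sequence $xs:=\lambda_n\bigl(x\cdot\mathtt{Sum}\,G(n)\bigr)$ satisfies $\mathbf{Conv}(xs,xy,\iota)$. Moreover, for every total $n$ the list $G(n)$ is total, so Lemma~\ref{lem:multdecomp} gives $(xs\,n)=x\cdot\mathtt{Sum}\,G(n)\in\coi$; hence $\forall_{n\in\total}(xs\,n)\in\coi$. Since $\iota$ is monotone we have $\iota\in\mathbf{Mon}$, so Theorem~\ref{Thm:CoILim} ($\mathtt{SdLim}$) applies to $xs$, the limit $xy$, and the modulus $\iota$, and we conclude $xy\in\coi$. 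On the level of programs the extracted term should be the composition $\mathtt{cSdLim}\,\iota\,\bigl(\lambda_n\,\mathtt{cCoIMultSum}\,u\,(\mathtt{cCoIToConvSeq}\,v\,n)\bigr)$, where $u,v$ are the input streams of $x$ and $y$.

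The last step is essentially bookkeeping: the only points needing care are the totality side conditions (that $G$ and each $G(n)$ are total, and that $xs$ is total, as required by $\mathtt{SdLim}$) and the check $\iota\in\mathbf{Mon}$. The substantive content — the telescoping $xy=x\sum_{i=1}^{n}d_i2^{-i}+x\,y_n2^{-n}$ and the convergence of the partial sums — has already been packaged into Lemmas~\ref{lem:multcoi} and~\ref{lem:multdecomp}, so there is no real obstacle remaining in this final step; the genuine work was in isolating those lemmas so that multiplication reduces to a single instance of the convergence theorem.
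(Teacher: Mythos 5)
Your proposal is correct and follows essentially the same route as the paper: apply Lemma~\ref{lem:multcoi} to $y$, use Lemma~\ref{lem:limclosedmult} (with $|x|\leq 1$) for convergence of $\lambda_n\,x\cdot\mathtt{Sum}\,G(n)$ to $xy$, use Lemma~\ref{lem:multdecomp} for membership of each term in $\coi$, and conclude with Theorem~\ref{Thm:CoILim}. Your bookkeeping of the totality and monotonicity side conditions, and the extracted term you describe, match the paper's treatment (indeed you cite Lemma~\ref{lem:multdecomp} where the paper's text has a small citation slip).
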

\begin{proof}
	We apply Lemma \ref{lem:multcoi} to $y\in\coi$ in order to obtain $G\in\mathbf T$ with
	\[
		\mathbf{Conv}(\mathtt{Sum}(G,\cdot),y,\mathtt{PosToNat}).
	\]By Lemma \ref{lem:limclosedmult} $x\cdot(\mathtt{Sum}(G\,n))\in\coi$ for all $n$ and it converges to $x\cdot y$ by Lemma \ref{lem:limclosedmult}. Hence, we apply Theorem \ref{Thm:CoILim}. 
\end{proof}
The extracted term from Haskell is given by
\begin{verbatim}
  cCoIMultLim :: Str -> Str -> Str
  cCoIMultLim u0 u1 = cCoILim 
    id 
    (\ n2 -> (cCoIMultSum u0 (cCoIToConvSeq u1 n2))).
\end{verbatim}
Now let $\mathtt{Mult}:\texttt{Str}\to\texttt{Str}\to\texttt{Str}$ denote the computational content in human-readable. Then
\[
\mathtt{Mult} (u,v):= \mathtt{coilim}(\mathtt{PosToNat},\lambda_n \mathtt{f}(u,\mathtt{g}(v,n))).
\]
We compare the algorithms obtained in this way using the indirect and direct limit theorem with the algorithm from \cite{schwichtenberg2021sdmult} that was obtained from a direct proof. To this end we apply all three algorithms to two randomly generated sequences and measure the runtime in Haskell. The result is the average over $n=10$ tests.
\begin{figure}[ht]
	\begin{tabular}{c|ccc}
		number of digits & mult(schwicht) & mult(lim,indirect) & mult(lim,direct) \\
		\hline
		5 & 0.056 & 0.51 & 0.1 \\
		10 & 0.061 & 15 & 0.71 \\
		15 & 0.063 & 432 & 11.6 \\
		30 & 0.095 & - & 60.8 \\
		100 & 0.43 & - & - \\
	\end{tabular}
	\caption{Third test - Runtime of different multiplication algorithms}
\end{figure}
Although the algorithm using the direct limit is better here, the algorithm from \cite{schwichtenberg2021sdmult} still performs best.
It seems that in order to obtain efficient algorithms, completeness results should only be used if they are really needed. 
\section{Conclusion and further work}
	We presented a formal method for extracting verified algorithms for exact real number arithmetic using different representations. All the proofs up to Section \ref{sec:app} have been carried out in the proof assistant Minlog. Furthermore automatic generation of correctness proofs and translation to Haskell was carried out. Even though the proofs from \ref{sec:app} have only been partially carried out in a proof assistant, the program extraction by hand was still a reliable method to get certified algorithms.
	
	Although algorithms extracted via the indirect method by translations tend to have a low lookahead, they do rely on rational arithmetic, so the direct method should outperform the indirect one in most cases. Our aim was to obtain verified algorithms and we do not claim that our programs are the most efficient. Some inefficiency stems from overestimation of bounds in formal proofs. These can usually be removed by careful analysis of the proofs involved.
	
	Since we have proven that the signed digit representation is closed under multiplication, average, division and limits we can now easily prove that a lot of functions are represented by stream-transformers, e.g.~trigonometric functions, using their Taylor-expansion directly as was done for Herons algorithm.
	Another viable approach to limits should be to use the completeness of metric spaces, i.e.~prove that signed-digits-reals satisfy the axioms of a metric space and then use a completeness theorem for abstract metric spaces.   
\section*{Acknowledgment}
The first author would like to thank  the Istituto Nazionale di Alta Matematica ``Francesco Severi" for their scholarship of his PhD study. 
This project has received funding from the European Unions Horizon 2020 research
and innovation programme under the Marie Sk{\l}odowska-Curie grant agreement No.~731143. In addition it was funded by the FWF project P 32080-N31.\\
Both authors would like to thank Helmut Schwichtenberg for proof reading this paper and for his support during the creation of this paper.
\bibliographystyle{alphaurl}
\bibliography{Sources}
\end{document}